\newtheorem{theor}{Theorem}
\newtheorem{con}{Conjecture}
\newtheorem{defin}{Definition}
\providecommand{\keywords}[1]{\textbf{\textbf{Keywords}} #1}
\begin{document}

\title{Fractional and $j$-fold colouring of the plane
}


\author[1,2]{Jarosław Grytczuk \thanks{
Supported by Polish National Science Centre grant 2011/03/B/ST6/01367}}
\author[2]{Konstanty Junosza-Szaniawski }
\author[2]{Joanna Sokół}
\author[2]{Krzysztof Węsek}


\affil[1]{Faculty of Mathematics and Computer Science, Jagiellonian University, Poland\\ 
email: {grytczuk@tcs.uj.edu.pl}
}
\affil[2]{    Faculty of Mathematics and Information Science, Warsaw University of Technology, Poland\\ 

email: \{k.szaniawski, j.sokol, k.wesek\}\@mini.pw.edu.pl
}

\maketitle

\begin{abstract}

We present results referring to the Hadwiger-Nelson problem which asks for the minimum number of colours needed to colour the plane with no two points at distance $1$ having the same colour. Exoo considered a more general problem concerning graphs $G_{[a,b]}$ with $\mathbb{R}^2$ as the vertex set and two vertices adjacent if their distance is in the interval $[a,b]$. Exoo conjectured $\chi(G_{[a,b]}) = 7$ for sufficiently small but positive difference between $a$ and $b$. We partially answer this conjecture by proving that $\chi(G_{[a,b]}) \geq 5$ for $b > a$.

A $j$-fold colouring of graph $G = (V,E)$ is an assignment of $j$-elemental sets of colours to the vertices of $G$, in such a way that the sets assigned to any two adjacent vertices are disjoint. The fractional chromatic number $\chi_f(G)$ is the infimum of fractions $k/j$ for $j$-fold colouring of $G$ using $k$ colours. We generalize a method by Hochberg and O'Donnel (who proved that  $G_{[1,1]} \leq 4.36$) for fractional colouring of graphs $G_{[a,b]}$, obtaining a bound dependant on $\frac{a}{b}$.

We also present few specific and two general methods for $j$-fold coloring of $G_{[a,b]}$ for small $j$, in particular for $G_{[1,1]}$ and $G_{[1,2]}$. The $j$-fold colouring for small $j$ has strong practical motivation especially in scheduling theory, while graph $G_{[1,2]}$ is often used to model hidden conflicts in radio networks.

\keywords{fractional colouring \and Hadwiger-Nelson problem\and colouring of the plane}

\textbf{Mathematics Subject Classification}{ 05C15,  05C10, 05C62}
\end{abstract}

\section{Introduction}

\subsection{Mathematical context}
The famous Hadwiger-Nelson problem asks for the minimum number of colours required to colour the Euclidean plane $\mathbb{R}^2$ in such a way that no two points at distance 1 from each other have the same colour. The question can be equivalently stated in the graph theory language: Recall that a \textit{colouring} of a graph $G=(V,E)$ is a function $c:V \rightarrow K$ (where $K$ is a set of colours) such that every two adjacent vertices  $x,y$ satisfy $c(x)\neq c(y)$. The \textit{chromatic number} $\chi(G)$ of $G$ is the minimal cardinality of the set of colours to colour $G$. Therefore the Hadwiger-Nelson problem is about determining the chromatic number of the graph on the set of vertices $\mathbb{R}^2$ with vertices in distance one adjacent - it is called \textit{unit distance graph} and in this article will be denoted by $G_{[1,1]}$ (the notation will be justified shortly).

The problem was originally proposed by Edward Nelson in 1950 and was made publicly known by Hugo Hadwiger \cite{fisher}. Pionners on the topic observed the following bounds: Nelson first showed that at least $4$ colours are needed (see the proof by Mosers \cite{moser} which uses the so-called Moser spindle) and John Isbell was first to prove that $7$ colours are enough (this result was published by Hadwiger \cite{fisher}). Somehow surprisingly, the aforementioned bounds remain unchanged since their origin in 1950's, i.e. for more than 60 years nobody has found nothing sharper than $4 \leq \chi(G) \leq 7$ - as long as we consider the full generality. Nevetheless, advanced studies on the question, its subproblems and other related topics provide some understanding. For example, if we consider only measurable colourings (i.e. with measurable colours) then at least $5$ colours are necessary \cite{falconer} and if colouring of the plane consists of regions bounded by Jordan curves then at least $6$ colours are required  \cite{woodall}. Generally, across the decades the Hadwiger-Nelson problem inspired many interesting results in the touchpoint of combinatorics and geometry,  a vast number of challenging problems and various applications.

One of possible ways to generalise the first question was presented by  Exoo \cite{exoo}. He considered graphs on the set of vertices $\mathbb{R}^2$ with vertices in distance in the interval $[a,b]$, we denote such graphs by $G_{[a,b]}$. How many colours would be enough to colour such a graph, depending on $a$ and $b$? Are there some important ranges of those parameters? 

The other path for research leading from the Hadwiger-Nelson problem concerns different models of graphs colouring. In the majority of this article we investigate fractional colourings, in some sense a generalisation of the classic colouring: every vertex gets a $j$-elemental set of colours from the set of colours (of size $k$) and the sets for adjacent vertices have to be disjoint. The 'quality' of such a colouring is measured by the fraction $k/j$ and the fractional chromatic number is the infimum of such fractions (one can also ask for the best fraction with a fixed $j$). It can be seen as in this case every single colour is just a $1/j$-size part of a 'complete colour', so we divide 'complete colours' from the classic colouring to somehow save a bit by combining partitioned colours in a clever way. It turns out that in fact we can save much using fractional colouring: fractional chromatic number is always lower or equal to chromatic number of a given graph, but the difference can be arbitrarily large.

The fractional chromatic number of the graph $G_{[1,1]}$ has been studied in the literature. The best upper bound is due to Hochbeg and O'Donnell \cite{hoch-odon} (based on an idea by Fisher and Ullman \cite{fish-ul} of looking for a dense subset of the plane which avoids unit distance) and the best lower bound (using a finite sugraph of the plane) can be found in the book of Scheinerman and Ullman \cite{fgt} (alongside with a good explanation of the upper bound): $3.555\leq \chi_f(G_{[1,1]})\leq 4.36$. Note that the upper bound is much smaller than the upper bound for $\chi(G_{[1,1]})$.



\subsection{Practical motivation}
Colouring of such geometrical graphs has also some significant practical motivations in telecommunication. We will briefly describe an example. More on this topic can be found in the paper of Walczak and Wojciechowski \cite{walczak}.

Consider the following problem: We are given a set of transmitters with equal ranges placed in some area (assume that ranges are equal to $1$) - some of them are in each other range, and some of them are not. If two transmitters are in each other's range, we assume that they can 'quickly' agree on their communication (there are algorithms for it). If two transmitters are not in each other's range but have a common neighbour $C$, then it is possible that $A$ and $B$ would try to transmit to $C$ in the same time - in this case $C$ cannot listen to both messages. Hence we have a conflict which cannot be solved by a direct communication between $A$ and $B$. If two transmitters $A$ and $B$ are not in each other's range and do not have a common neighbour, then there is no conflict. The problem is to assign time-slots for transmitters (in an equitable way) such that no two conflicted transmitters share a time interval.

How we can use graph colouring in this problem? We can create the graph $G$ of conflicts for this network of transmitters: vertices correspond to transmitters and two vertices are adjacent if the corresponding transmitters $X$, $Y$ are at distance grater than $1$ and have a common transmitter in their respective ranges (note that it is possible only if $X$ and $Y$ are at distance in the set $(1,2]$). Hence, using a $k$-colouring of $G$, such that each colour corresponds to one time-slot is one of the possible ways of constructing a proper transmission-schedule. By the definition of colouring, the produced schedule does not contain any pair of conflicted transmitters sharing their time of transmission, and every transmitter gets the same amount of time in one cycle of transmission. The length of the schedule is $k$. On the other hand, we can make use of a fractional colouring of $G$. Since we demand that in one cycle of transmission every transmitter gets a unit of time, then every colour in a fractional coloring corresponds to interval of length $1/j$ of the time unit. Both conflict-freeness and equitability are satisfied in this colouring model also, and (if we used $k$ colours) the length of the schedule is $k/j$. The relation between considered colouring models imply that fractional colouring can possibly produce a shorter schedule in comparison to classic colouring, which (in real world) means: we can save a bit of time. However, the technical constraints imply that too big values of $j$ are not accepted - too fragmented schedule is not practical. This issue plays important role in our work - we devote a part of this article to fractional colourings with 'small $j$'.

Now, consider a graph on the set of vertices $\mathbb{R}^2$ with vertices at distance in the set $[1,2]$ adjacent, which will be denoted in this article as $G_{[1,2]}$. Clearly, every conflict graph of a transmitters network as described above is a subgraph of $G_{[1,2]}$. Therefore, any colouring or fractional colouring of infinite $G_{[1,2]}$ induce, respectively, a colouring or a fractional colouring of a given finite conflict graph. Hence, we get a universal scheme for any network. Additionally, this universal scheme works also if the transmitters are placed on moving vehicles (thus changing the conflict graph). This is an indisputable advantage over using standard colouring algorithms for a given conflict graph.

\section{Preliminaries}
First, we need to formally define fractional and classic colouring of graphs. 
\begin{defin}
\textnormal{Colouring of a graph} $G=(V,E)$ with $k$ colours (or \textnormal{$k$-colouring}) is an assignment of colours $\lbrace1,2, ... ,k\rbrace$ to the vertices of $G$ such that no two adjacent vertices have the same colour.
The smallest number of colours needed to colour a graph $G$ is called \textnormal{chromatic number} and denoted by $\chi(G)$.
\end{defin}
\begin{defin}
Let $P_j(k)$ be a family of all $j$-elemental subsets of $\{1,2,...,k\}$.\\
\textnormal{$j$-fold colouring of graph} $G=(V,E)$ is an assignment of $j$-element sets of colours to the vertices of $G$, i.e. $f: V\longrightarrow P_j(k),$\ such that for any two adjacent vertices $v, w\in V$ we have $f(v)\cap f(w)=\emptyset$.\\
The smallest number of colours $k$ needed for a $j$-fold colouring of a graph $G$ is called the \textnormal{$j$-fold chromatic number} and denoted by $\chi_j(G)$.

The \textnormal{fractional chromatic number} is defined to be:\
\begin{displaymath}\chi_f(G):=\inf_{j\in\mathbb{N}}\frac{\chi_j(G)}{j} =\lim_{j\rightarrow\infty}\frac{\chi_j(G)}{j} \end{displaymath}

\end{defin}
Since an $1$-fold colouring of a graph is just a classic colouring, then $\chi_f(G)\leq\chi(G)$.

Now, we will present two possible, equivalent notions of graphs on the Euclidean plane we are considering in this work. The first is due to Exoo \cite{exoo}. The second is introduced in this paper and is  more convenient for our work, except the Section \ref{s_jarek}. 
\begin{defin}
An $\varepsilon$-unit distance graph denoted by $G_\varepsilon$ is a graph whose vertices are all points of the plane, in which two points are adjacent if their distance $d$ satisfies $1-\varepsilon \leq d \leq 1+\varepsilon$, i.e.\\
$G_\varepsilon=(\mathbb{R}^2, \{ \{x,y\}\subset \mathbb{R}^2\ |\ 1-\varepsilon\leq dist(x,y)\leq 1+\varepsilon  \})$
\end{defin}
\begin{defin}
A graph $G_{[a,b]}$ is a graph whose vertices are all points of the plane $V=\mathbb{R}^2$, in which two points are adjacent if their distance $d$ satisfies $a \leq d \leq b$.\\
$G_{[a,b]}=(\mathbb{R}^2, \{ \{x,y\}\subset \mathbb{R}^2\ |\ a\leq dist(x,y)\leq b  \}$
\end{defin}

Note that every $G_\varepsilon$ graph can be defined as $G_{[1-\varepsilon,1+\varepsilon]}$ and every $G_{[a,b]}$ graph can be defined as $G_\varepsilon$ graph with $\varepsilon=\frac{b-a}{b+a}$. Additionally, it is enough to consider graphs $G_{[1,b]}$ since $G_{[a,b]} \cong G_{[1,b/a]}$.\\

\section{Colouring of $G_{[a,b]}$}\label{s_jarek}

The classic Hadwiger-Nelson problem is considered to be very difficult, in particular giving a better general lower bound than $4$. In his article Exoo \cite{exoo} looked for values of $\varepsilon$ such that we can determine the chromatic number of $G_{\varepsilon}$ and he succeed in finding some such values. He proved that for $0.134756...<\varepsilon<0.138998...$ the exact value of $\chi(G_{\varepsilon})$ is $7$, and for $\varepsilon>0.008533...$ we get $\chi(G_{\varepsilon}) \geq 5$.

His work (including computational experiments) suggested that for small enough $\varepsilon$ the exact value of the chromatic number of $G_\varepsilon$ is $7$.

\begin{con}\textnormal{\cite{exoo}}
For any $\varepsilon>0$ we have $\chi(G_\varepsilon)=7$.
\end{con}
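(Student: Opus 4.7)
The statement as written cannot hold for every $\varepsilon>0$: as $\varepsilon\to 1$ the graph $G_\varepsilon$ contains arbitrarily large cliques (many points can be packed in a disk of radius $1$ with all pairwise distances in $[1-\varepsilon,1+\varepsilon]$), so $\chi(G_\varepsilon)$ is unbounded. Reading the conjecture, in line with the abstract, as ``$\chi(G_\varepsilon)=7$ for all sufficiently small $\varepsilon>0$'', my plan splits into the two familiar inequalities.

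For the upper bound $\chi(G_\varepsilon)\le 7$ I would use Isbell's classical hexagonal 7-colouring. Tile the plane by regular hexagons of side $s$ and colour them cyclically along an index-$7$ sublattice, so that every monochromatic class is a union of disjoint hexagons, each of diameter $D=2s$, with centres pairwise separated by a minimum distance $S=cs$ for some constant $c>2$. Choose $s$ so that $D<1<S-D$, and set $\varepsilon_0 := \min(1-D,\,S-D-1)>0$; for every $\varepsilon<\varepsilon_0$, no monochromatic pair of points can be at distance in $[1-\varepsilon,1+\varepsilon]$, hence $\chi(G_\varepsilon)\le 7$.

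The real difficulty is the matching lower bound $\chi(G_\varepsilon)\ge 7$. The plan would be to exhibit, for each admissible $\varepsilon$, a finite subgraph of $G_\varepsilon$ with chromatic number $7$. The leverage offered by $\varepsilon>0$, compared to the classical problem, is that one may continuously perturb configurations so that new distances fall inside $[1-\varepsilon,1+\varepsilon]$, enriching the graph with extra edges. A natural tactic is a rotation construction: pivot rotated copies of a known highly chromatic gadget (Moser spindles and their generalisations) around a shared vertex and iterate, each copy adding constraints on any attempted $6$-colouring, until $6$-colourability is forced to fail; the interval freedom would be used to guarantee that the new pairwise distances produced by the rotations actually become edges of $G_\varepsilon$.

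This step is where I expect the plan to break down, and is the main obstacle. Even for $\varepsilon=0$ the best known lower bound on $\chi(G_{[1,1]})$ stood for decades at $4$ (with the more recent improvement to $5$ requiring a computer-verified construction on more than a thousand vertices), and no finite subgraph of the plane is known to have chromatic number $6$, let alone $7$. Lifting this barrier, even with the extra freedom of a positive $\varepsilon$, appears to demand a genuinely new combinatorial or computational idea; the rotation scheme above, in particular, has no obvious reason to terminate at $7$ rather than stall much earlier. I would therefore, in practice, settle for the weaker bound $\chi(G_\varepsilon)\ge 5$ for all $\varepsilon>0$ that the authors announce in the abstract, and treat Exoo's full conjecture as a direction for future work.
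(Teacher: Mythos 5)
This statement is a conjecture attributed to Exoo, not a theorem of the paper; the authors offer no proof of it, and none is known. Your assessment is therefore essentially correct: you rightly decline to prove the statement, you correctly observe that the literal quantification ``for any $\varepsilon>0$'' cannot be right (for $\varepsilon$ near $1$ the graph $G_{[1-\varepsilon,1+\varepsilon]}$ contains arbitrarily large cliques, so the intended reading, consistent with the abstract, is ``for all sufficiently small $\varepsilon>0$''), and your hexagonal argument for the upper bound $\chi(G_\varepsilon)\le 7$ for small $\varepsilon$ is sound and standard. You also correctly identify the lower bound as the genuinely open part.

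One comparison worth making: for the partial result $\chi(G_\varepsilon)\ge 5$ that you fall back on, the paper does not use finite subgraphs, spindle rotations, or any computer search. It instead invokes a Euclidean Ramsey-type theorem of Nielsen: every $2$-colouring of the plane admits a monochromatic \emph{limit} triangle congruent to any prescribed triangle. Given a hypothetical $4$-colouring of $G_\varepsilon$, the authors merge colours $\{1,2\}$ and $\{3,4\}$ into a $2$-colouring, apply Nielsen's theorem to the unit equilateral triangle to obtain a monochromatic triangle whose vertices are within $\varepsilon/2$ of an exact unit equilateral triangle (so all three sides lie in $[1-\varepsilon,1+\varepsilon]$ and are edges of $G_\varepsilon$), and then pigeonhole three vertices into the two original colours to force a monochromatic edge. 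This exploits the positive width of the distance interval in a way unavailable for the classical $\varepsilon=0$ problem, and is considerably lighter than the rotation/gadget machinery you sketch; on the other hand, it saturates at $5$ and gives no route toward $6$ or $7$, so your closing judgment that the full conjecture remains out of reach matches the authors' own position.
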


We will give a partial answer to this conjecture. It appears that the pure positivity of $\varepsilon$ allow us to establish a lower bound of $5$, strictly better than $4$. We will use a result by Nielsen \cite{triangles} and for that we need some additional notions.

Given a colouring of the plane $F$, a triangle $T=xyz$ is a \emph{monochromatic limit triangle} if there is a monochromatic set $\{x_1,y_1,z_1,x_2,y_2,z_2,...   \}$ such that $x_n\longrightarrow x$, $y_n\longrightarrow y$, $z_n\longrightarrow z$ and each of the triangles $T_n=x_ny_nz_n$ is similar to $T$.

We will say that triangles $xyz$ and $x'y'z'$ are $\varepsilon -close$ if $\Vert x-x'\Vert<\varepsilon$, $\Vert y-y'\Vert<\varepsilon$ and $\Vert z-z'\Vert<\varepsilon$.

\begin{theor}\label{trojkaty}\textnormal{\cite{triangles}}
Let $F$ be a two-colouring of the plane and let $T$ be a triangle. Then $F$ admits a monochromatic limit triangle congruent to $T$.
\end{theor}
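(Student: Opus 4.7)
I would reduce the statement to the stronger (and by now classical) claim that every $2$-colouring of the plane contains a monochromatic \emph{congruent} copy of any triangle $T$. Once such an exact copy $xyz$ is found, the constant sequences $x_n\equiv x$, $y_n\equiv y$, $z_n\equiv z$ form a monochromatic set, each $T_n=x_ny_nz_n$ is trivially similar to $T$, and the sequences converge to $x,y,z$, so $xyz$ is a monochromatic limit triangle congruent to $T$.

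To prove the stronger claim, the plan is a Ramsey-style pigeonhole argument on a finite auxiliary configuration $H\subset\mathbb{R}^2$. The prototypical choice is the $7$-point set consisting of the vertices and centre of a regular hexagon of side $s$. Its internal distances take only the values $s$, $s\sqrt 3$, and $2s$, so it realises equilateral triangles of sides $s$ and $s\sqrt 3$, isosceles triangles with sides $(s,s,s\sqrt 3)$, and right triangles with sides $(s,s\sqrt 3,2s)$. The pigeonhole principle forces at least $\lceil 7/2\rceil=4$ points of $H$ to share a colour, and a short case analysis of the $4$-subsets of $H$ exhibits a monochromatic triple congruent to $T$ provided $s$ is chosen to match the sides of $T$.

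For a general triangle $T$ a single hexagon is insufficient, since its triple-distance structure is very restricted. I would handle general $T$ by combining several such symmetric configurations at carefully chosen scales and orientations --- for example, unions of hexagons whose side lengths encode the three sides of $T$ --- and running the pigeonhole-plus-case-analysis argument on the enlarged point set. Alternatively, one may exploit the fact that the theorem asks only for a \emph{limit} triangle and pursue a Baire-category argument in the $4$-dimensional parameter space $\Sigma$ of triangles similar to $T$ (parametrised by centroid, rotation, and scale). Partitioning $\Sigma$ into the ``all red'', ``all blue'', and ``mixed'' loci, at least one monochromatic locus is non-meager, and its closure in $\Sigma$ then meets the $3$-dimensional scale-$1$ slice, directly producing the required congruent limit triangle.

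The main obstacle I anticipate is the case analysis in the Ramsey argument when $T$ is a generic triangle: no small symmetric finite configuration realises all triangle shapes, so one must build auxiliary configurations tailored to the side lengths of $T$ and verify that the monochromatic majority in each colour class always contains a congruent copy of $T$. The Baire-category alternative circumvents this combinatorial explosion but shifts the difficulty to ensuring that the open set provided by non-meagerness actually intersects the scale-$1$ slice; this can be arranged by sliding the scale coordinate continuously inside the non-meager open set and using that similar triangles of nearby scales differ by arbitrarily small perturbations of the vertices.
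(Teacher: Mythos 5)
The paper does not prove this statement at all --- it is quoted verbatim from Nielsen's paper \emph{Approximating Monochromatic Triangles in a Two-colored Plane} and used as a black box --- so the only thing to assess is whether your argument stands on its own. It does not, and the failure is at the very first step. You reduce the theorem to the ``stronger claim'' that every two-colouring of the plane contains an exact monochromatic congruent copy of $T$. That claim is \emph{false} for equilateral triangles: colouring the plane in alternating half-open horizontal strips of width $\sqrt{3}/2$ (the height of a unit equilateral triangle) yields a two-colouring with no monochromatic unit equilateral triangle, a classical example going back to the Euclidean Ramsey theory of Erd\H{o}s, Graham, Montgomery, Rothschild, Spencer and Straus. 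Worse, the equilateral case is precisely the one this paper invokes Theorem 1 for. For non-equilateral triangles the ``stronger claim'' is a famous open conjecture, certainly not something a pigeonhole count on the seven points of a hexagon will deliver. The entire point of Nielsen's ``limit triangle'' formulation is to circumvent the possible non-existence of exact monochromatic copies, so any proof that begins by producing an exact copy is proving the wrong (and sometimes false) statement.

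Your Baire-category fallback also has concrete gaps. When you partition the parameter space $\Sigma$ of similar copies of $T$ into ``all red'', ``all blue'' and ``mixed'', the non-meager class may well be the mixed one, so you cannot conclude that a monochromatic locus is non-meager; the colouring is completely arbitrary, so these loci have no Baire property you can exploit. Even granting that some monochromatic locus is somewhere dense, its closure has interior only in some region of $\Sigma$ whose scale coordinates form an interval that need not contain $1$, so you would obtain a monochromatic limit triangle \emph{similar} to $T$ at some uncontrolled scale rather than one \emph{congruent} to $T$. Finally, note that the definition requires the union of the vertices of \emph{all} the approximating triangles $T_n$ to be a single monochromatic set, which your density argument does not arrange. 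A correct proof really does require Nielsen's argument (or an equivalent), which produces, for every $\varepsilon>0$, a monochromatic triangle within $\varepsilon$ of a congruent copy of $T$ and then extracts a convergent monochromatic subsequence; none of the machinery you propose substitutes for that step.
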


\begin{theor}
For any $\varepsilon>0$ we have $\chi(G_\varepsilon)\geq5$.
\end{theor}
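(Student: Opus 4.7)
The plan is to argue by contradiction. Suppose $\chi(G_\varepsilon)\le 4$ and fix a proper 4-colouring $c:\mathbb{R}^2\to\{1,2,3,4\}$. The key idea is to collapse the four colours into two pairs and then invoke Theorem \ref{trojkaty}. Define a 2-colouring $c'$ of the plane by $c'(p)=\mathrm{A}$ if $c(p)\in\{1,2\}$ and $c'(p)=\mathrm{B}$ if $c(p)\in\{3,4\}$. This $c'$ is merely a 2-colouring of $\mathbb{R}^2$ (it need not be proper for anything), which is exactly the input that Theorem \ref{trojkaty} requires.

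Next, I would apply Theorem \ref{trojkaty} to $c'$ with $T$ chosen to be an equilateral triangle of side length $1$. This yields a monochromatic limit triangle $xyz$ congruent to $T$: there exist sequences $x_n\to x$, $y_n\to y$, $z_n\to z$ such that each $T_n=x_ny_nz_n$ is similar to $T$ (hence equilateral) and every vertex of every $T_n$ receives the same $c'$-colour, say $\mathrm{A}$. In particular, $c(x_n),c(y_n),c(z_n)\in\{1,2\}$ for all $n$.

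The contradiction then falls out of the slack given by $\varepsilon>0$. Because $x_n\to x$, $y_n\to y$, $z_n\to z$ and the limit triangle $xyz$ has all sides of length exactly $1$, the common side length of the equilateral triangle $T_n$ tends to $1$. Hence for all sufficiently large $n$ every side of $T_n$ lies in $[1-\varepsilon,1+\varepsilon]$, so the three vertices of such a $T_n$ are pairwise adjacent in $G_\varepsilon$ and must therefore carry three distinct colours under the proper colouring $c$. But by construction $c(x_n),c(y_n),c(z_n)\in\{1,2\}$, a set of size only $2$, which is the desired contradiction and forces $\chi(G_\varepsilon)\ge 5$.

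There is no serious obstacle once Theorem \ref{trojkaty} is accepted; the only subtle point is the passage from "similar to $T$" (the statement's hypothesis on $T_n$) to "pairwise adjacent in $G_\varepsilon$", which is precisely what the positivity of $\varepsilon$ enables. If instead one worked with $G_{[1,1]}$ (that is, $\varepsilon=0$), the $T_n$ could be approximately but not exactly of side $1$, and the argument would collapse. This explains why the hypothesis $\varepsilon>0$ lifts the classical lower bound from $4$ to $5$.
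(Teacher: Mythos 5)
Your proof is correct and follows essentially the same route as the paper's: collapse the four colours into two classes, apply Nielsen's theorem to an equilateral triangle of side $1$, and use the positivity of $\varepsilon$ to conclude that some approximating triangle $T_n$ has all sides in $[1-\varepsilon,1+\varepsilon]$, forcing three pairwise-adjacent vertices into a two-element colour set. No gaps; the handling of ``similar to $T$'' versus edge lengths in $G_\varepsilon$ matches the paper's $\varepsilon/2$-closeness argument.
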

\begin{proof}
Let $\varepsilon>0$ and suppose that $\chi(G_\varepsilon)\leq4$. Let $F$ be a $4$-colouring of $G_\varepsilon$ and let $F'$ be a $2$-colouring of the plane such that each point of colour $1$ or $2$ in $F$ is white in $F'$ and each point of colour $3$ or $4$ in $F$ is black in $F'$. Let $T=xyz$ be an equilateral triangle with a side length 1.\\
From Theorem \ref{trojkaty} $F'$ admits a monochromatic limit triangle congruent to $T$(lets say it is white). So from the definition of monochromatic limit triangle there exists $n$ such that the triangle $T_n=x_ny_nz_n$ is $\frac{\varepsilon}{2} -close$ to $T$. The side lengths of $T_n$ are within the interval $[1-\varepsilon,1+\varepsilon]$, hence the sides of $T_n$ are edges of $G_\varepsilon$. Since all vertices of $T_n$ are white in $F'$ then each of them is coloured with $1$ or $2$ in $F$. Since there are three vertices and two colours there is a monochromatic edge, which contradicts the assumption that $F$ is a $4$-colouring of $G_\varepsilon$.
\qed\end{proof}

We believe that there is still place to prove a better bound.

\section{Fractional chromatic number of $G_{[a,b]}$}

\indent  The best known way to find an upper bound for the fractional chromatic number of $G_{[1,1]}$ is by looking for a set in the plane with highest possible density but without vertices in distance equal to 1. The densest such set was presented by Hochbeg and O'Donnell \cite{hoch-odon}. They presented a proof of the upper bound by a limit argument. In the following theorem we generalize their construction for $G_{[1,b]}$. We give a complex description of the method for any $b$: not only the upper bound, but also an explicit specification of a sequence of fractional colourings "converging" to the upper bound. This sequence give a finite fractional coloring with "quality" as close to the upper bound as needed.

\begin{theor}\label{twplacki}
If $b\ge 1$ then $\chi_{f}(G_{[1,b]})\leq \frac{\sqrt{3}}{3}\cdot\frac{b+
\sqrt{1-x^2}}{x}$ where $x$ is the root of\\ $ bx=\frac{\pi}{6}-\arcsin(x)$.
Moreover there exists a sequence of $(\frac{n}{2(b+1)}-1)^2$-fold colourings with $n^2$ colours for $n\geq1$.
\end{theor}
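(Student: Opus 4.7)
The strategy is the density approach of Fisher-Ullman and Hochberg-O'Donnell: construct a measurable independent set $S \subseteq \mathbb{R}^2$ of $G_{[1,b]}$ (no two points at distance in $[1,b]$) with density $\delta$ as large as possible, and then deduce $\chi_f(G_{[1,b]}) \leq 1/\delta$ by averaging $\mathbf{1}_S$ over all translations of $\mathbb{R}^2$. The two claims of the theorem correspond to two different choices of $S$: a pancake-like set that realises the closed-form bound, and a coarser but fully explicit square-based set that realises the discrete sequence of $j$-fold colourings.

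For the upper bound I would generalise the H-O pancake construction. Place translates of a pancake $P$ on a triangular lattice $\Lambda$ of spacing $s$ (to be determined), where $P$ is a neighbourhood of the origin trimmed by six symmetric ``bad-distance'' cuts: for each nearest neighbour $v \in \Lambda$ excise from $P$ all points $p$ admitting some $q \in P+v$ with $|p-q|\in[1,b]$. Sixfold symmetry reduces the fixed-point $P$ to a shape whose boundary is built from arcs of $\partial P$ itself and arcs of forbidden annuli around the six neighbours. Introducing a single scalar parameter $x$ describing the half-width of these cuts, the requirement that the boundary arcs close up into a single closed curve produces exactly the scalar equation $bx = \pi/6 - \arcsin(x)$, the corresponding spacing works out to $s=b+\sqrt{1-x^2}$, and integrating the area of $P$ against the fundamental-cell area $\frac{\sqrt{3}}{2}s^2$ gives the density $\delta$ whose reciprocal is $\frac{\sqrt{3}}{3}\cdot\frac{b+\sqrt{1-x^2}}{x}$.

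For the explicit $\bigl(\frac{n}{2(b+1)}-1\bigr)^2$-fold colouring with $n^2$ colours, I would switch to the much simpler independent set $S=[0,\frac{1}{2}]^2+(b+1)\mathbb{Z}^2$: the diameter of each square is $\frac{\sqrt{2}}{2}<1$, and the nearest pair of points drawn from distinct squares is at distance $(b+1)-\frac{1}{2}=b+\frac{1}{2}>b$, so $S$ is independent in $G_{[1,b]}$. The $n^2$ colour classes are the translates $S_{k,\ell}=S+\frac{b+1}{n}(k,\ell)$ for $(k,\ell)\in\{0,\dots,n-1\}^2$, each of them an independent set. A point $p$ lies in $S_{k,\ell}$ iff $\frac{p}{b+1}-\frac{(k,\ell)}{n}$ lies, modulo $\mathbb{Z}^2$, inside the small box $[0,\frac{1}{2(b+1)}]^2$. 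Since the number of $k\in\{0,\dots,n-1\}$ for which $k/n$ lands (modulo $1$) in an interval of length $\frac{1}{2(b+1)}$ is at least $\lfloor \frac{n}{2(b+1)}\rfloor \geq \frac{n}{2(b+1)}-1$, every point of the plane receives at least $\bigl(\frac{n}{2(b+1)}-1\bigr)^2$ colours, giving the claimed $j$-fold colouring.

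The main obstacle I expect is the pancake step: the shape $P$ is defined implicitly as a fixed point of a self-referential procedure, since the cuts depend on translates of $P$ itself. One must either posit the explicit arc-bounded shape and verify self-consistency directly, or set up a monotone iterative construction and pass to the limit. Even granted the shape, extracting the scalar identity $bx=\pi/6-\arcsin(x)$ from the closure condition on $\partial P$ and obtaining the closed-form density requires a careful angular and circular-arc calculation that is likely the most technical part of the argument; the density-to-$\chi_f$ step and the discrete $j$-fold construction are comparatively routine.
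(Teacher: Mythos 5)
Your overall strategy for the first claim (a dense measurable independent set of $G_{[1,b]}$, with $\chi_f$ bounded by the reciprocal density) is indeed the paper's, but there is a genuine gap in how you propose to obtain the set and, crucially, the equation $bx=\frac{\pi}{6}-\arcsin(x)$. The set is not a fixed point of a self-referential trimming procedure: it is simply the intersection of a disk of unit diameter with a concentric regular hexagon, which gives a \emph{one-parameter family} of valid shapes (internal distances are $<1$ because of the disk; copies are placed on a triangular lattice with a gap of $b$ between the flat sides of adjacent copies, so external distances exceed $b$). Writing $x=\sin(\alpha/2)$ for the half-chord of a straight edge and $y$ for the half-length of a circular arc, elementary geometry gives $y=\frac{\pi}{6}-\arcsin(x)$ for \emph{every} member of the family; the boundary ``closes up'' for all admissible $x$, so your plan to extract $bx=\frac{\pi}{6}-\arcsin(x)$ from a closure condition on $\partial P$ cannot work. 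That equation is instead the first-order optimality condition: the density reciprocal works out to $\frac{2\sqrt{3}(b+\sqrt{1-x^2})^2}{\pi-6\arcsin(x)+6x\sqrt{1-x^2}}$ with lattice spacing $s=b+\sqrt{1-x^2}$, and differentiating in $x$ and setting the derivative to zero yields $6bx+6\arcsin(x)-\pi=0$, i.e.\ $y=bx$, after which the bound simplifies to $\frac{\sqrt{3}}{3}\cdot\frac{b+\sqrt{1-x^2}}{x}$. Without identifying the explicit shape and recognising the equation as an optimisation (not consistency) condition, the first claim is not established. A smaller point: for this infinite graph the paper does not invoke a translation-averaging argument but realises the density bound constructively, as the limit of explicit $h_n$-fold colourings obtained by overlaying a hexagonal tiling of mesh $s/n$ on the $n^2$ lattice translates of $S$ and colouring only tiles fully contained in a component.

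Your proof of the ``moreover'' clause, by contrast, is correct and takes a genuinely different route from the paper. The paper derives its $j$-fold colourings from the same pancake set, counting small hexagons fully contained in a shrunken copy of $A$, which gives roughly $\sqrt{3}\bigl(\frac{n}{b+\sqrt{1-x^2}}-2\bigr)^2(bx+x\sqrt{1-x^2})$ folds per $n^2$ colours; the $\bigl(\frac{n}{2(b+1)}-1\bigr)^2$ in the statement is a weakened form of this. Your square-lattice set $[0,\frac12]^2+(b+1)\mathbb{Z}^2$ with the $n^2$ translates $S_{k,\ell}$ proves the stated clause directly and more transparently (the per-coordinate count $\lfloor\frac{n}{2(b+1)}\rfloor\ge\frac{n}{2(b+1)}-1$ is exactly right, modulo the trivial caveat that the squared inequality needs $n\ge 2(b+1)$ to be meaningful). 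The trade-off is that your construction's limiting ratio is $4(b+1)^2$ colours per fold, far worse than the pancake's, so it cannot substitute for the first claim --- which you correctly do not attempt.
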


\begin{proof}
Note: In order to keep this proof shorter we omit some details of calculations.

We will define a set $S$ in the plane with high density but without vertices in distance in $[1,b]$ in the following way. Let set $A$ be an intersection of a disk of unit diameter and a hexagon with common center point as in the Figure \ref{placek}.
\begin{figure}[h]
\center
\includegraphics[scale=0.5]{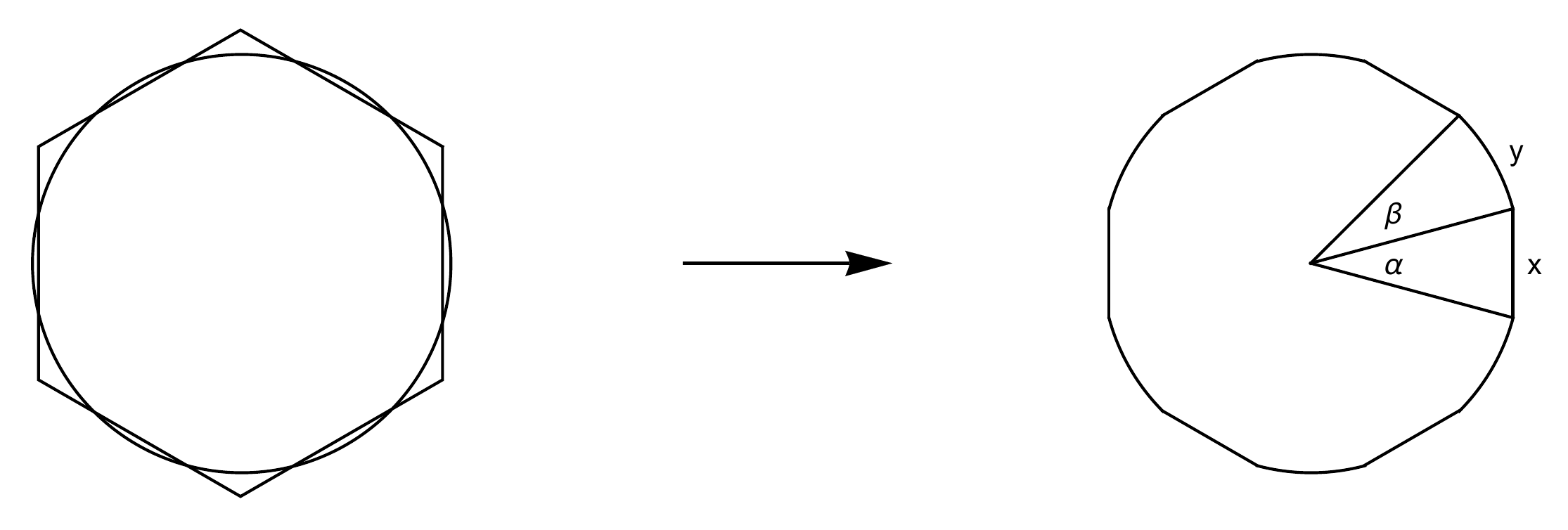}
\caption{Set $A$}\label{placek}
\end{figure} The shape of $A$ changes when we change the size of the hexagon. In fractional colouring of $G_{[1,b]}$ we choose $A$ the ratio between the length of the circular arc - $y$ and the segment - $x$ to be $b$ (it will be explained later). 

From the construction we get the following equalities:
$x=\sin(\frac{\alpha}{2})$, $y=\frac{\beta}{2}$, $\alpha+\beta=\frac{\pi}{3}$. Therefore we obtain
\begin{equation}\label{xy}
y=\frac{\pi}{6}-\arcsin(x)
\end{equation}

Then we build $S$ by placing copies of $A$ on the plane with distance $b$ like in the Figure \ref{placki}. Assume that two neighbouring components of $S$ are centered at $(0,0)$ and $(s,0)$ which determines the location of all components of $S$.
\begin{figure}[h]
\center
\includegraphics[width=12cm]{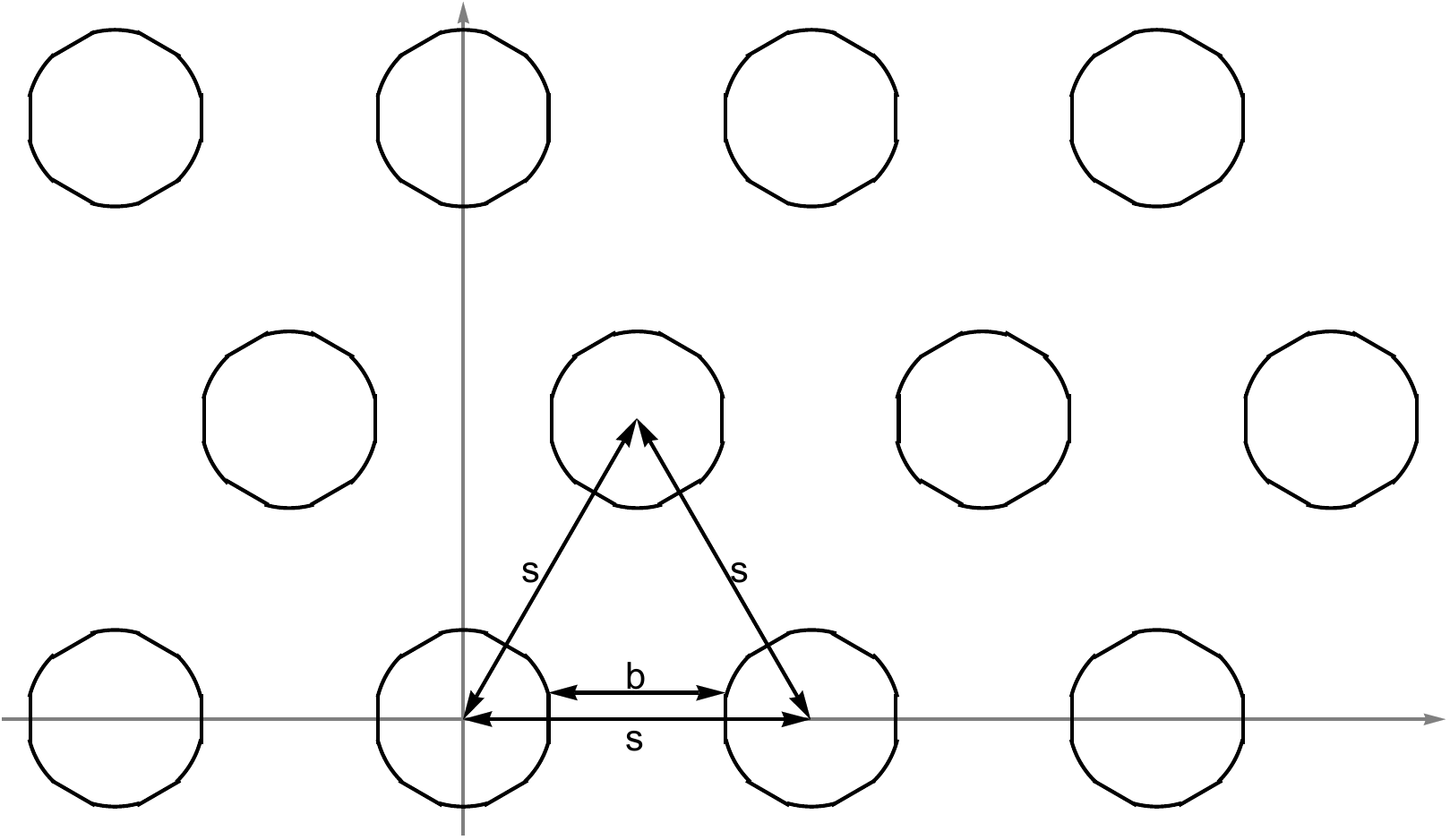}
\caption{Set $S$}\label{placki}
\end{figure}
Let $n$ be a positive integer. We define a tiling with hexagons of width $s/n$ in such a way that the left side of some hexagon is a part of the left vertical segment of the border of $A$. 

For $0\leq i,j < n$ let $S_{i,j}=S+ (si/n) (1,0) + (sj/n)(1/2,\sqrt{3}/2)$.
We define $h_n$-fold colouring of the plane as follows. We assign colour $(i,j)$ to all points in hexagons fully contained in $S_{i,j}$. Now we will estimate the number $h_n$ of hexagons that are fully contained in one copy of $A$, let say $A^0$. Let $A^0_n$ be a rescaled copy o $A$ with the same center as $A^0$  and diameter $(1-2\frac{s}{n})$. The area of $A^0_n$ is $p_n= \frac{1}{4} (1-2\frac{s}{n})^2 (\pi - 6 \arcsin(x) + 6x\sqrt{1-x^2} )$, and the area of the hexagon is $(s/n)^2(\sqrt{3}/2)$. Observe that if a hexagon of the tiling has non-empty intersection with $A^0_n$ then it is fully contained in $A^0$. Therefore we can bound $h_n$ from below:
 $$h_n\geq\frac{p_n}{(s/n)^2(\sqrt{3}/2)}=\frac{  \frac{1}{4} (1-2\frac{s}{n})^2 (\pi - 6 \arcsin(x) + 6x\sqrt{1-x^2} )   }{(s/n)^2(\sqrt{3}/2)}=$$ $$=\sqrt{3}(\frac{n}{b+\sqrt{1-x^2}} -2)^2 (b x + x\sqrt{1-x^2}). $$ On the other hand we conclude that every hexagon is contained in $h_n$ of $S_{i,j}$ (since we can get all the hexagons by shifting one of them by $(si/n) (1,0) + (sj/n)(1/2,\sqrt{3}/2)$), hence every point in a hexagon has $h_n$ of $n^2$ colours assigned.\\
With $n$ tending to infinity the set of points coloured $1$ tends to $S$ and our sequence of $h_n$-fold colourings gives us an upper bound for the fractional chromatic number of $G_{[1,b]}$:

$$ \chi_f (G_{[1,b]})\leq \lim_{n\rightarrow \infty} \frac{n^2}{\frac{1}{2\sqrt{3}}\big(\frac{n}{b+\sqrt{1-x^2}} -2\big)^2 \big(\pi- 6\arcsin(x) + 6x\sqrt{1-x^2}\big)}=$$
$$ = \frac{2\sqrt{3}}{\pi- 6\arcsin(x) + 6x\sqrt{1-x^2}}  \cdot \lim_{n\rightarrow \infty} \frac{n^2}{(\frac{n}{b+\sqrt{1-x^2}} -2)^2}=$$ 
$$= \frac{2\sqrt{3} (b+\sqrt{1-x^2})^2}{\pi- 6\arcsin(x) + 6x\sqrt{1-x^2}}=$$
$$=\frac{2\sqrt{3} (b+\sqrt{1-x^2})^2}{\pi- 6\arcsin(x) + 3\sin(2\arcsin(x))}$$

Now lets explain why did we choose the ratio between the length of the circular arc - $y$ and the segment - $x$ to be $b$.
To find minimal value we take the derivative of the upper bound for $\chi_f (G_{[1,b]})$ presented above and check when it equals $0$.   
\begin{displaymath} \frac{4 \sqrt{3} x \left(b+\sqrt{1-x^2}\right) \left(6 b x+6 \arcsin(x)-\pi \right)}{\sqrt{1-x^2} \left(-6 \arcsin(x)+3
   \sin \left(2 \arcsin(x)\right)+\pi \right)^2} =0\end{displaymath}
\begin{displaymath}   
  \Longleftrightarrow 6 b x+6 \arcsin(x)-\pi =0 
\end{displaymath}

Applying equality \ref{xy} we obtain: 
\begin{displaymath}  
6 b x-6 y =0
\end{displaymath}
and hence
\begin{displaymath}   
   y=bx
\end{displaymath}


To complete the proof we transform the formula $\frac{2 \sqrt{3} \left(b+\sqrt{1-x^2}\right)^2}{\pi -6 \arcsin(x)+3
   \sin \left(2 \arcsin(x)\right) }$ using previous equality and by doing so we get:
   $\frac{\sqrt{3}}{3}\cdot\frac{b+\sqrt{1-x^2}}{x}$ as the upper bound for the fractional chromatic number of graph $G_{[1,b]}$.
\qed\end{proof}
Figure \ref{wykres} and Table \ref{tab0} below present the upper bound for fractional chromatic number of $G_{[1,b]}$ for different values $b$.

\begin{table}[h!]
\begin{center}
\begin{tabular}{c | c|c|c|c|c}
$b=$				 & 1    & 1,5  & 2   & 3     & 4 \\\hline
$\chi_f(G_{[1,b]})\leq$& 4,36 & 6,86 & 9,9 & 17,62 & 27,55\\
\end{tabular}
\end{center}\caption{Applications of Theorem \ref{twplacki}}\label{tab0}
\end{table}

\begin{figure}[h!]\label{wykres}
\center
\includegraphics[scale=1]{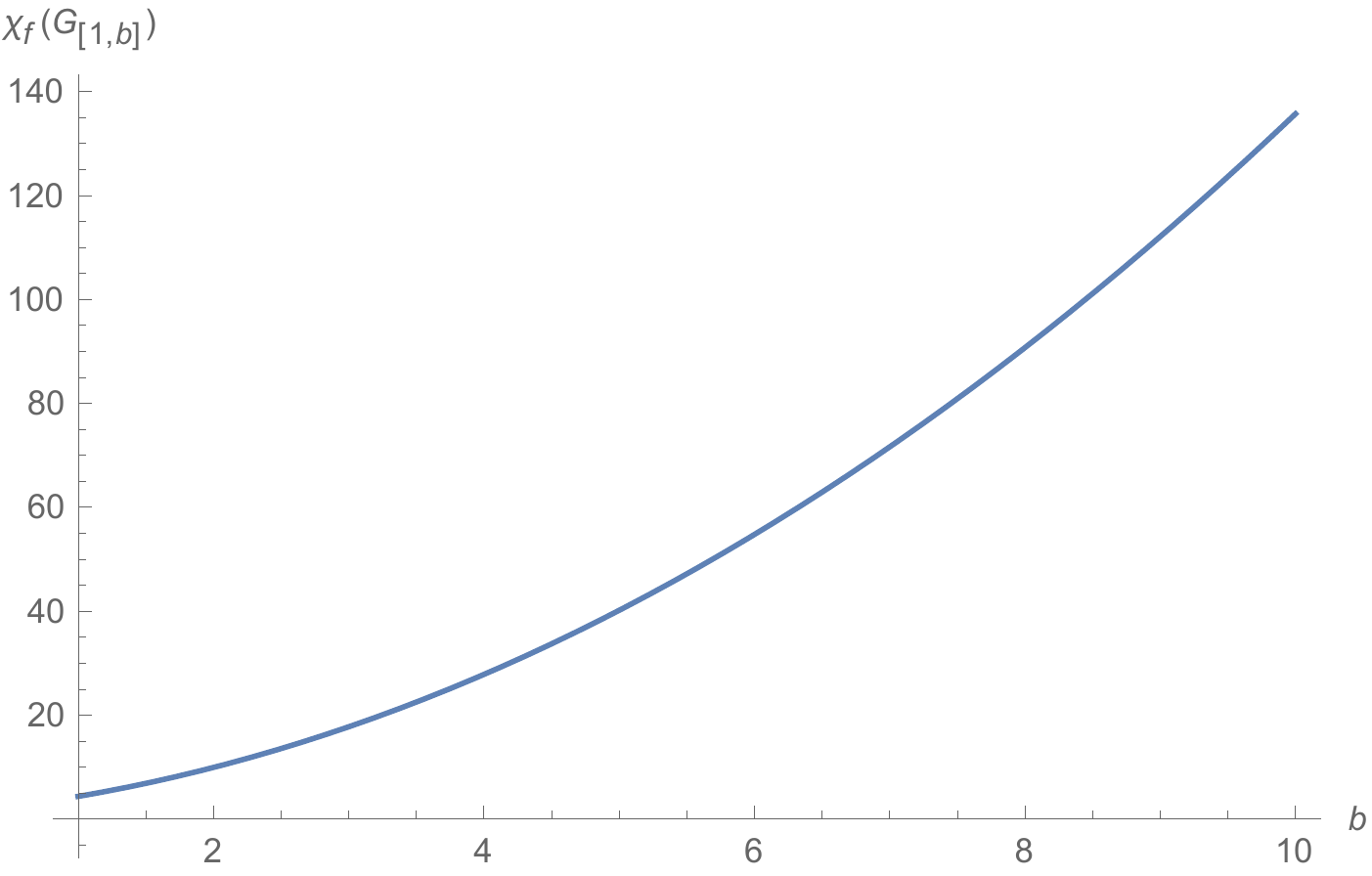}
\caption{The upper bound for $\chi_f(G_{[1,b]})$ from Theorem \ref{twplacki}}\label{wykres}
\end{figure}

\section{$j$-fold colouring of the plane}
Note that the upper bounds for fractional chromatic numbers of $G_{[1,b]}$ are established by presenting an infinite sequence of $j$-fold colourings. These colourings give results close to the upper bound only for very big $j$. However, as it was mentioned in the introduction, for the practical reasons it is often the case that only $j$-fold colouring with small $j$ are valuable to consider. Furthermore, this consideration leads also to a purely mathematical question: How fast, in terms of $j$, can we get close to the infinite limit - the upper bound for fractional chromatic number? In this section we give some insight in this matter.

\subsection{Methods for $G_{[1,1]}$}
In this subsection we concentrate on $G_{[1,1]}$.
\begin{theor}\label{tw2-3}
There are $2$-fold colouring of $G_{[1,1]}$ with $12$ colours and $3$-fold colouring of $G_{[1,1]}$ with $16$ colours.
\end{theor}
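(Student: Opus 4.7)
The plan is to exhibit both colorings by explicit periodic constructions, in the spirit of Isbell's $7$-coloring of $G_{[1,1]}$ based on a hexagonal tiling. For each statement I would describe a tiling of $\mathbb{R}^2$ by congruent closed regions (most naturally regular hexagons of diameter $d<1$), arranged periodically with a fundamental domain of $N$ tiles, and assign to each tile a $j$-element subset of a palette $\{1,\ldots,k\}$ in such a way that any two tiles whose closures contain a pair of points at distance exactly $1$ receive disjoint $j$-subsets. Extending the assignment by periodicity yields a $j$-fold $k$-coloring of the whole plane.

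For the 2-fold $12$-coloring, the natural choice is $N=12$ (or $N=6$ with each color used once per period), so that each of the $12$ colors occupies a density-$1/6$ subset of the plane. One picks a hexagon diameter $d<1$ and a lattice $\Lambda$ of periods so that the resulting \emph{conflict graph} $H$ on the fundamental domain --- two tiles joined whenever some $\Lambda$-translate of one realizes a distance in the closed interval $\{1\}$ to the other --- is sparse and symmetric. The assignment of $2$-subsets to the $12$ tiles is then designed using the rotational and translational symmetries of the tiling, so that every edge of $H$ connects tiles with disjoint pairs. The analogous plan applies to the 3-fold $16$-coloring, with $N=16$, $j=3$, a somewhat smaller hexagon diameter, and an appropriately scaled lattice; each color is then used on $3$ tiles of the fundamental domain.

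Once an assignment is written down, verification is routine: by periodicity and the bound $d<1$, only finitely many pairs of tiles can realize distance $1$, and for each such pair one checks disjointness of the assigned $j$-subsets directly. The main obstacle is the combinatorial design step. For a given tiling and lattice, finding a valid $j$-fold $k$-coloring of $H$ is a nontrivial constraint-satisfaction problem, and it is not a priori clear that the specific values $k=12$ and $k=16$ are attainable. Exploiting the $60^\circ$-rotational and translational symmetry of the hexagonal tiling cuts down the search space drastically; if symmetry alone does not suffice, one falls back on a direct computer search over assignments on a symmetry-reduced fundamental domain. Tuning the geometric parameters $(d, \Lambda)$ so that the resulting conflict graph $H$ admits such an economical $j$-fold coloring is the heart of the argument, and is the step I expect to require the most care.
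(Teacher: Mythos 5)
Your framework is the right one --- the paper's proof does exactly what you outline: a hexagonal tiling (side length $1/2$, with half-open boundaries so that no single tile contains two points at distance $1$), a periodic assignment of colours, and additional ``layers'' obtained by translating the coloured grid, so that each point accumulates $j$ colours. But your proposal stops precisely where the theorem begins. The entire content of the statement is that the specific values $k=12$ for $j=2$ and $k=16$ for $j=3$ are \emph{attainable}, and you explicitly concede that ``it is not a priori clear that the specific values $k=12$ and $k=16$ are attainable'' and defer the design step to symmetry heuristics or a computer search. A proof must exhibit the assignment and verify it; without that, nothing has been proved.

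For the record, the paper's constructions are quite simple and require no search. For $j=2$, $k=12$: colour each row of hexagons periodically with a block of $3$ colours, cycling through four such blocks row by row (so the $12$ colours form a $4\times 3$ periodic pattern); this is the first layer. The second layer is the same coloured grid translated by $[3\sqrt{3}/4,\,-3/2]$. One then checks that two hexagons sharing a colour have all pairwise point distances exceeding $1$ (the critical diagonal separation works out to about $1.08$). For $j=3$, $k=16$: rows carry blocks of $4$ colours, cycling through four blocks ($4\times 4 = 16$ colours), and the second and third layers are obtained by translating by $[\sqrt{3},-1]$ and twice that vector; here some same-coloured hexagons from different layers realize distance exactly $1$, which is handled by the half-open boundary convention. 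So the missing ingredient in your write-up is not a subtle lemma but the explicit combinatorial data itself --- and since you flagged that step as the one you could not complete, the proposal has a genuine gap rather than merely omitted routine detail.
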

\begin{proof}
For our colourings we are going to use classic covering of the plane with hexagons with side length $1/2$. Obviously in a hexagon of side length $1/2$ there are pairs of point in a distance $1$, so when we say we colour a hexagon we mean the interior of the hexagon plus its right border and its three vertices: the upper one and the two on the right (see Figure \ref{hexagon}).
\begin{figure}[h]
\center\includegraphics[scale=0.5]{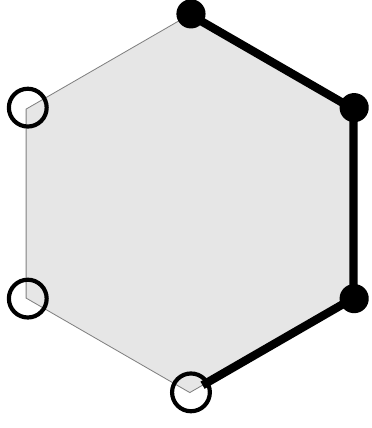}
\caption{Borders of a hexagon}\label{hexagon}
\end{figure}

Firstly we show our $2$-fold colouring of $G_{[1,1]}$ with 12 colours. We are going to use the hexagon grid twice with the set of colours $\{1,2,...,12\}$. We create first layer by simply giving each row of hexagons three numbers and use it periodically so that if one row has colours from set $\{1,2,3\}$ the next gets colours $\{4,5,6\}$ and so on. Having this colouring as our model we get second layer by moving the coloured grid by a vector $[3\sqrt{3}/4,-3/2]$. See Figure \ref{2fold} with first layer of colours and marked placement of colour $1$ in the layers. The  distances between two hexagons of the same color are: vertically $2$, horizontally $\sqrt{3}$ and diagonally $\frac{5\sqrt{3}}{8}\approx 1.08$, so they are all bigger than $1$.

\begin{figure}[h]
\center\includegraphics[scale=0.5]{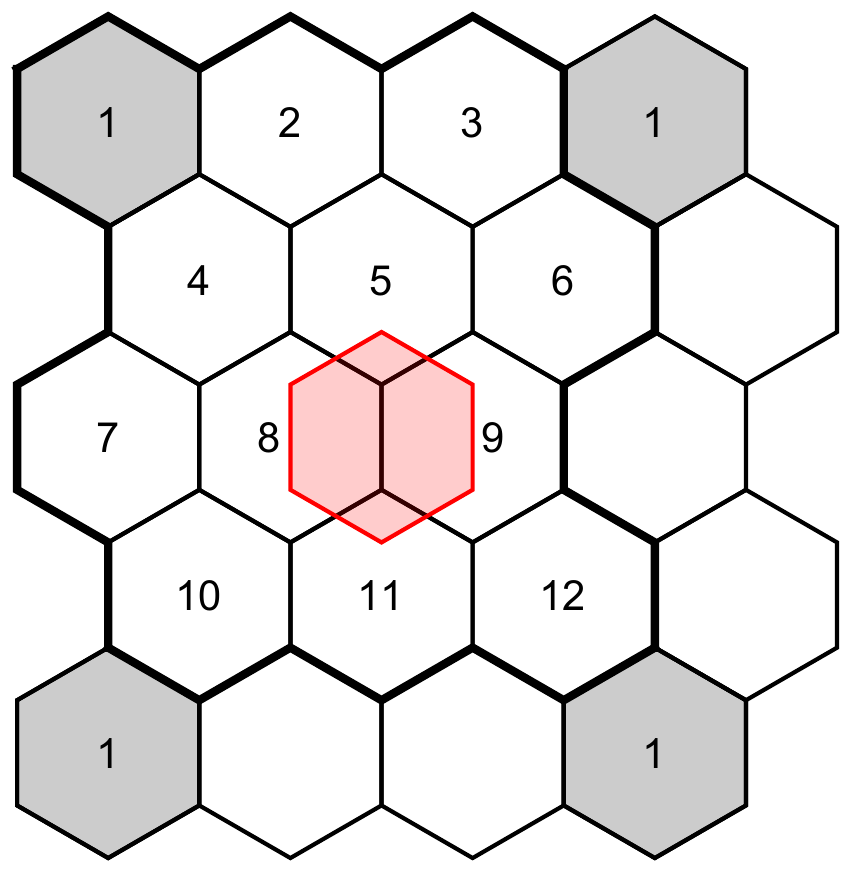}
\caption{$2$-fold colouring of $G_{[1,1]}$ with 12 colours}\label{2fold}
\end{figure}

To create a $3$-fold colouring of $G_{[1,1]}$ with $16$ colours we again start with the classic covering of the plane with hexagons with side length 1/2. We are going to use the hexagon grid 3 times with the set of colours $\{1,2,3,...,16\}$. We create first layer by simply giving each row of hexagons four numbers and use it periodically so that if one row has colours from set $\{1,2,3,4\}$ the next gets colours $\{5,6,7,8\}$ and so on. Having this colouring as our model we get second and third layers by moving the coloured grid by a vector $[\sqrt{3},-1]$. See Figure \ref{3fold} with first layer of colours and marked placement of colour $1$ in other layers. The distances between two hexagons of the same color are at least $1$ ($=1$ in case of two hexagons from different layers).
\begin{figure}[h]
\center\includegraphics[scale=0.5]{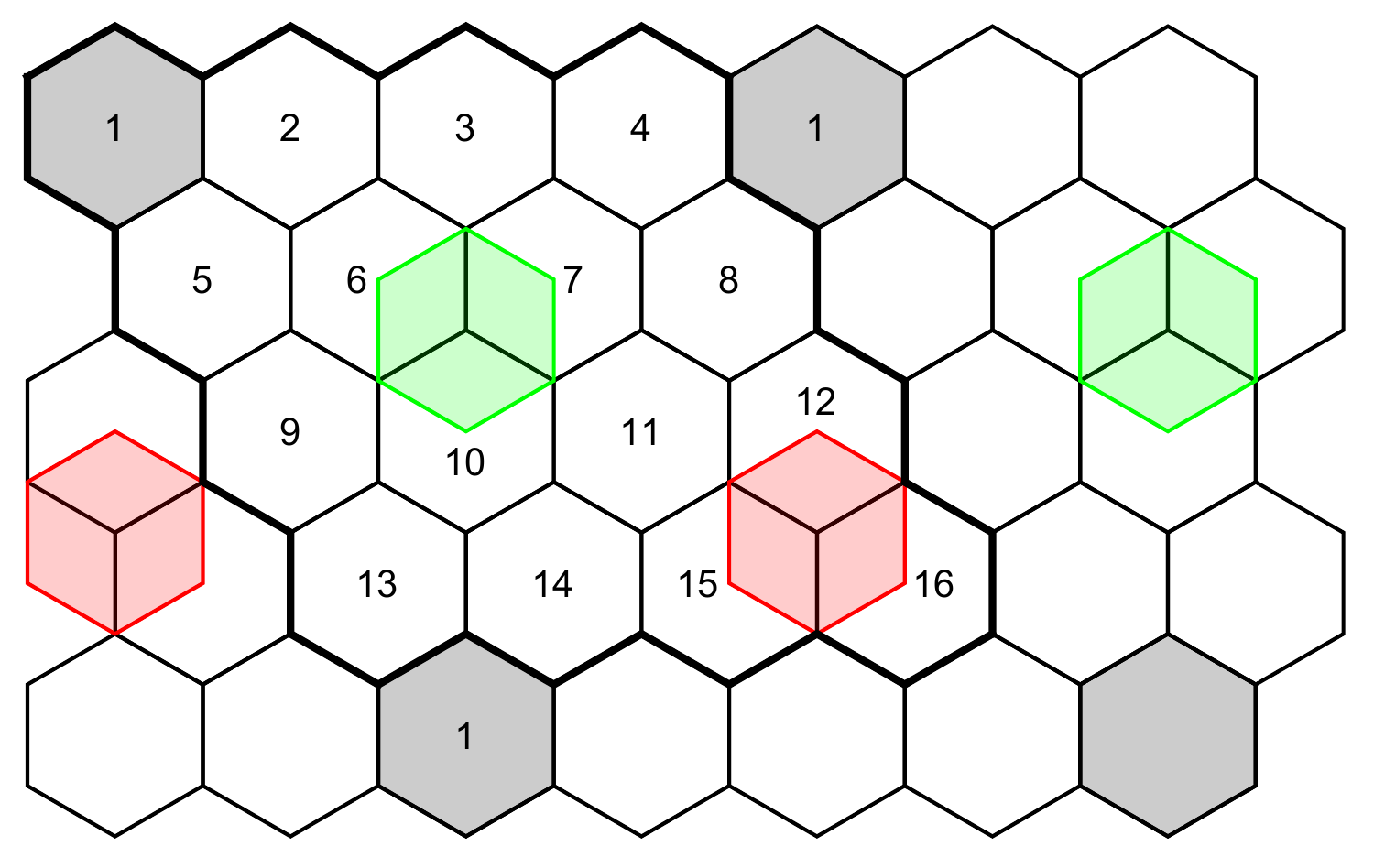}
\caption{$3$-fold colouring of $G_{[1,1]}$ with 16 colours}\label{3fold}
\end{figure}

\qed\end{proof}

\begin{theor}\label{tw37}

There is a $7$-fold colouring of $G_{[1,1]}$ with 37 colours i.e. $\frac{37}{7}\approx 5.285$.

\end{theor}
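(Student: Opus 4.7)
My plan is to mimic the shifted-layer construction of Theorem~\ref{tw2-3}, exploiting an algebraic feature of the number $37$. Tile the plane with closed hexagons of side $1/2$, keeping the same boundary convention so that each hexagon is itself an independent set of $G_{[1,1]}$. The task then reduces to a $7$-fold $37$-colouring of the hexagonal centre lattice $\Lambda_{h}$ in which any two hexagons containing a pair of points at distance $1$ receive disjoint colour sets. A direct case check (two side-$1/2$ hexagons at centre-distance $d$ contain a distance-$1$ pair only when $d\leq 2$) shows that the "forbidden" offsets form a set of $18$ non-zero vectors in $\Lambda_{h}$, of lengths $\tfrac{\sqrt{3}}{2}$, $\tfrac{3}{2}$ and $\sqrt{3}$ (six in each direction-orbit).

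Because $37 = 3^{2}+3\cdot 4+4^{2}$ is the norm of the Eisenstein integer $3+4\omega$, where $\omega = e^{i\pi/3}$, the lattice $\Lambda_{h}$ admits a sublattice $\Lambda'$ of index $37$ with $\Lambda_{h}/\Lambda' \cong \mathbb{Z}_{37}$ and shortest non-zero vector of length $\tfrac{\sqrt{111}}{2}\approx 5.27 > 2$. Identifying the $37$ cosets with the $37$ colours yields a valid $1$-fold $37$-colouring $c_{0}\colon \Lambda_{h}\to \mathbb{Z}_{37}$. For a parameter $v\in \mathbb{Z}_{37}^{\times}$, put $c_{k}(H) = c_{0}(H)+kv$ for $k=0,\dots,6$ and assign to each hexagon $H$ the $7$-subset $\{c_{0}(H),c_{0}(H)+v,\dots,c_{0}(H)+6v\}$. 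Each shifted colouring $c_{k}$ is again a valid $1$-fold colouring (shifting preserves the "different" relation), and the $7$-fold colouring is legal precisely when, for every forbidden offset $d$ and every pair $i\neq j$ in $\{0,\dots,6\}$, $(j-i)v\neq \overline{d}$ in $\mathbb{Z}_{37}$, i.e. the $12$-element symmetric set $\{\pm v,\pm 2v,\dots,\pm 6v\}$ is disjoint from the image $D\subset \mathbb{Z}_{37}$ of the $18$ forbidden offsets.

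The main obstacle is exhibiting compatible $\Lambda'$ and $v$. A rough count $12+18<37$ leaves room for the disjointness to be achievable, and verifying it amounts to computing $D$ for a specific $\Lambda'$ (say the one generated by $3+4\omega$) and scanning the $36$ non-zero elements of $\mathbb{Z}_{37}$ for a value of $v$ that works; the cyclic structure of $\mathbb{Z}_{37}^{\times}$ makes such a search quick. I expect the proof in the paper to present a specific pair $(\Lambda',v)$ via an explicit figure, in the spirit of Figures~\ref{2fold} and~\ref{3fold}, and to verify the required disjointness by direct inspection.
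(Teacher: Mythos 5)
The reduction in your first paragraph is where the construction fails, and it fails irreparably: no $7$-fold colouring of $G_{[1,1]}$ in which every side-$1/2$ hexagon is monochromatic (i.e.\ receives a single $7$-set) can use only $37$ colours. Consider the six hexagons centred at $(0,0)$, $(\sqrt{3}/2,0)$, $(\sqrt{3},0)$, $(\sqrt{3}/4,-3/4)$, $(3\sqrt{3}/4,-3/4)$, $(\sqrt{3}/2,-3/2)$. All fifteen pairwise centre-distances lie in $\{\sqrt{3}/2,\ 3/2,\ \sqrt{3}\}$, i.e.\ every pair realises one of your own $18$ forbidden offsets and hence contains two points at distance exactly $1$. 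These six hexagons must therefore receive pairwise disjoint $7$-sets, which already forces at least $42$ colours. Consequently no sublattice $\Lambda'$ and no $7$-subset of $\mathbb{Z}_{37}$ (arithmetic progression or otherwise) can complete your plan. For your concrete $\Lambda'$ generated by $3+4\omega$ this is visible algebraically: $\omega$ maps to $27$ in $\mathbb{Z}_{37}$, one gets $D=\pm\{1,2,9,10,11,12,15,16,17\}$, the six hexagons above project to the $6$-clique $\{0,1,2,11,12,22\}$ of the circulant graph, so its independence number is at most $\lfloor 37/6\rfloor=6$; since a valid choice of $v$ is exactly a $7$-element independent set of the form $\{0,v,\dots,6v\}$, the deferred scan you propose would come back empty. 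The heuristic $12+18<37$ does not detect this because the obstruction is a clique, not a counting shortage.

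The paper escapes this by working at a different scale: it tiles the plane with hexagons of side $1/(2\sqrt{7})$ and groups them into $7$-hexagon flowers of diameter exactly $1$, each flower (with a half-open boundary) being an independent set of $G_{[1,1]}$. One colour class is a union of such flowers, the $37$ translates of this pattern by multiples of the small hexagon's width cover each small hexagon exactly $7$ times, and same-coloured flowers are at distance $\sqrt{31}/(2\sqrt{7})\approx 1.05>1$. Abstractly this is still your ``index-$37$ sublattice plus a $7$-element coset pattern'' scheme, but with small hexagons the forbidden centre-offsets form an annulus around radius $1$ rather than a disk of radius $2$, so the seven cosets of a flower are mutually compatible. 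That annulus-versus-disk distinction is the idea your version is missing.
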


\begin{proof}
To create $7$-fold colouring of $G_{[1,1]}$ we start with a hexagon grid with hexagon side's length $s=\frac{1}{2\sqrt{7}}$. We colour with the first colour some hexagons in the pattern presented with blue colour on Figure \ref{7fold}. Then we shift our pattern by a vector $[\frac{\sqrt{3}}{2\sqrt{7}},0]$ and colour it with second colour (colour red on the Figure \ref{7fold}). Repeating this action 37 times we have 7 colours for each hexagon and there are 37 colours total. In our base colouring figure made by 7 hexagons the largest distance between two points is 1 so choosing half of the border to be coloured and the other not is enough to make sure there are no two point in distance 1 in one figure. The distance between two such figures is 
$$  \sqrt{      (3\sqrt{3}s)^2   +(2s)^2    } = s \sqrt{31} = \frac{\sqrt{31}}{2\sqrt{7}} \approx 1.05 $$
\begin{figure}[h]
\center\includegraphics[scale=0.5]{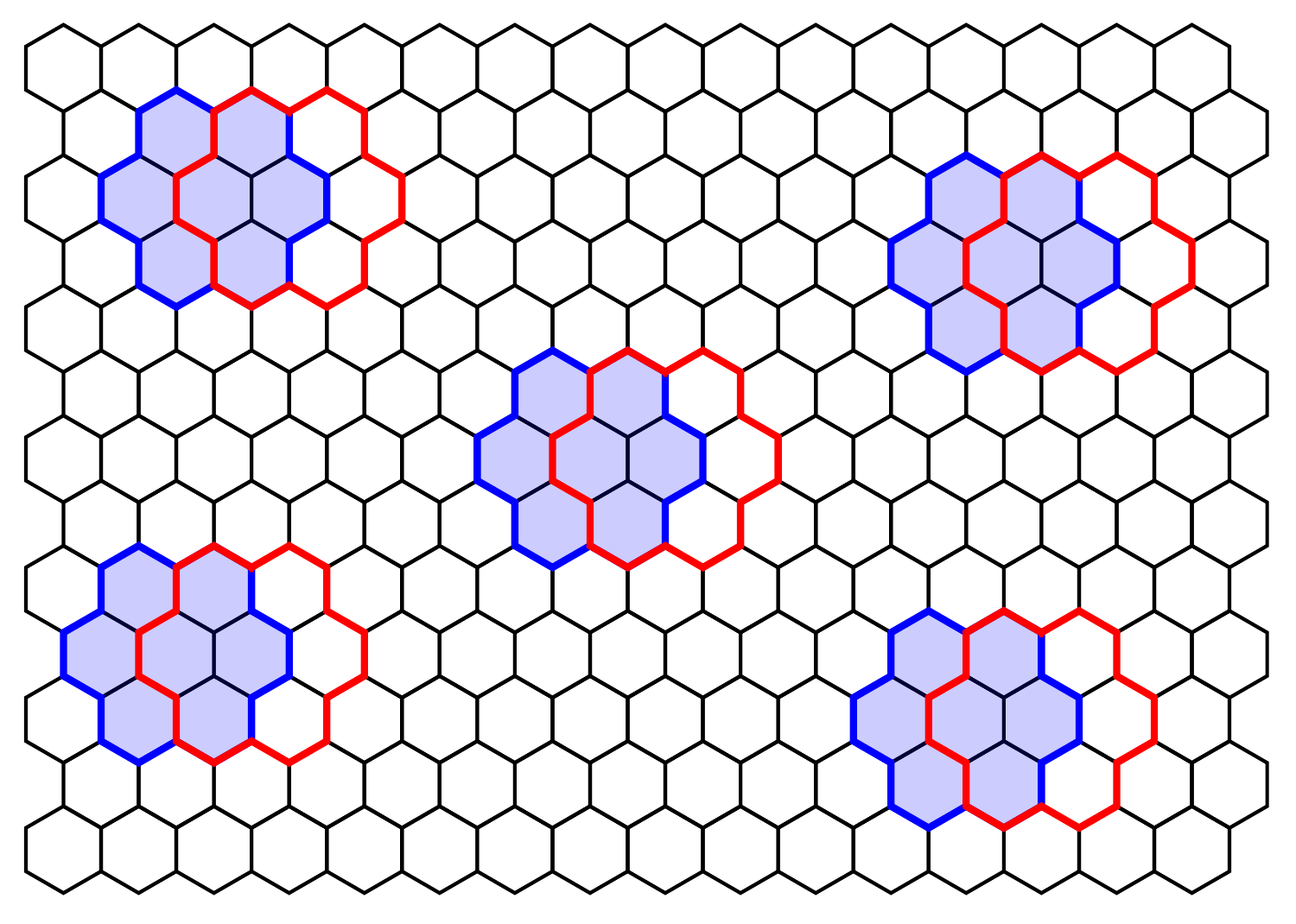}
\caption{$7$-fold colouring of $G_{[1,1]}$ with 37 colours}\label{7fold}
\end{figure}
\qed\end{proof}

\subsection{Methods for $G_{[a,b]}$}
In this section we give two general methods for building $j$-fold colourings (for small $j$) for graphs $G_{[1,b]}$.
\begin{theor} \label{nm}
There exists a $nm$-fold colouring with $\lceil(\frac{2b}{\sqrt{3}}+1)\cdot n\rceil\cdot \lceil(\frac{2b}{\sqrt{3}}+1)\cdot m\rceil$ colours of the graph $G_{[1,b]}$ i.e.
 $\frac{\chi_{nm}(G_{[1,b]})}{nm}\leq \frac{\lceil(2b/ \sqrt{3}\ +1)\cdot n\rceil\cdot \lceil(2b/ \sqrt{3}\ +1)\cdot m\rceil}{nm}$.
\end{theor}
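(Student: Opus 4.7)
The plan is to generalise the layered hexagonal-tiling technique used in Theorem~\ref{tw2-3}. I would tile the plane with regular hexagons of side $\tfrac12$ (so that, with the boundary convention of Figure~\ref{hexagon}, every closed hexagon is independent in $G_{[1,b]}$), and set $\alpha=\tfrac{2b}{\sqrt 3}+1$, $N=\lceil\alpha n\rceil$, $M=\lceil\alpha m\rceil$. The aim is to construct an $nm$-fold colouring of the hexagons (and thereby of the plane) drawing from a single palette of $NM$ colours.

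First I would set up the $1$-fold case ($n=m=1$) as a warm-up and template: work in axial coordinates $(q,r)$ of the hex lattice, with basis $\vec{e}_1,\vec{e}_2$ of length $\tfrac{\sqrt3}{2}$ meeting at $60^\circ$, and colour the hexagon at $(q,r)$ by $(q\bmod N,\,r\bmod M)$. Any nonzero same-colour offset is a $\mathbb{Z}$-combination of $N\vec{e}_1$ and $M\vec{e}_2$, so by the distance formula $|p\vec{e}_1+q\vec{e}_2|=\tfrac{\sqrt3}{2}\sqrt{p^2+pq+q^2}$ its Euclidean length along any of the three axial directions $\vec{e}_1,\vec{e}_2,\vec{e}_1-\vec{e}_2$ is at least $\lceil\alpha\rceil\cdot\tfrac{\sqrt3}{2}$; subtracting the apothem $\tfrac{\sqrt 3}{4}$ contributed by each hexagon leaves closest-point distance at least $(\lceil\alpha\rceil-1)\tfrac{\sqrt 3}{2}>b$, which is strict whenever $\alpha$ is non-integer (the borderline case is absorbed by the boundary convention).

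To promote this to an $nm$-fold colouring, I would layer $nm$ translated copies of the tiling, using shifts $\vec{s}_{k,l}=\tfrac{k}{n}\vec{e}_1+\tfrac{l}{m}\vec{e}_2$ for $0\le k<n$, $0\le l<m$, so that each point of the plane lies in exactly $nm$ hexagons. Viewing the union of shifted centres as a refined lattice $\Lambda^{\star}$ of index $nm$ in the original hex lattice $\Lambda$, with basis $\tfrac{1}{n}\vec{e}_1,\tfrac{1}{m}\vec{e}_2$, I would parameterise $\Lambda^{\star}$ by $(i,j)\in\mathbb{Z}^2$ and assign the single colour $(i\bmod N,\,j\bmod M)$ to each point. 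A coarse hexagon of any translate then carries the $n\times m$ block of colours corresponding to its $\Lambda^{\star}$-points, for a total of $NM=\lceil\alpha n\rceil\lceil\alpha m\rceil$ colours. A brief check shows the $nm$ colours received by each point of the plane are distinct, so the colouring is indeed $nm$-fold.

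The main obstacle is verifying the cross-layer distance condition in all three axial directions of the hex lattice simultaneously. The two coordinate axes $\vec{e}_1,\vec{e}_2$ are handled by the choice $N/n\ge\alpha$ and $M/m\ge\alpha$, which together with the apothem-subtraction argument yield closest-point distance $>b$. The delicate case is the third axis $\vec{e}_1-\vec{e}_2$, because same-colour offsets can take the mixed form $\tfrac{\Delta i}{n}\vec{e}_1-\tfrac{\Delta j}{m}\vec{e}_2$ with $N\mid\Delta i$ and $M\mid\Delta j$; along this third direction the Euclidean length is $\tfrac{\sqrt 3}{2}\sqrt{(\Delta i/n)^2-(\Delta i/n)(\Delta j/m)+(\Delta j/m)^2}$, and the symmetric choice of the same threshold $\alpha$ in both $N=\lceil\alpha n\rceil$ and $M=\lceil\alpha m\rceil$ is precisely what keeps this length at least $\alpha\cdot\tfrac{\sqrt 3}{2}$, so the apothem subtraction again gives closest-point distance $>b$ and the colouring is proper.
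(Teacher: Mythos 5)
Your construction is essentially the paper's own: the same side-$\frac{1}{2}$ hexagonal tiling, the same $nm$ layers obtained by shifting the grid by $\frac{k}{n}\vec{e}_1+\frac{l}{m}\vec{e}_2$, and the same periods $\lceil\alpha n\rceil$ and $\lceil\alpha m\rceil$ with $\alpha=\frac{2b}{\sqrt{3}}+1$ arising from the centre-distance threshold $b+\frac{\sqrt{3}}{2}$. One minor caution: in the mixed-sign case the offset $\frac{\Delta i}{n}\vec{e}_1-\frac{\Delta j}{m}\vec{e}_2$ need not point along an apothem direction, so rather than subtracting twice the apothem from its Euclidean length you should bound its projection onto the unit vector in the direction $\vec{e}_1-\vec{e}_2$, which equals $\frac{\sqrt{3}}{4}\big(\frac{\Delta i}{n}+\frac{\Delta j}{m}\big)\ge\alpha\frac{\sqrt{3}}{2}$ and yields the required closest-point separation; with this adjustment your verification is if anything more careful than the paper's.
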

\begin{proof}
In the proof we are going to create $n\cdot m$ coloured hexagon grids. A colour of a hexagon will be a pair of numbers, first of which will be related to the row the hexagon is in and the second corresponds to the column.\\
Let $W_1^1$ be a hexagon grid with hexagons with side length 1/2. Let $H$ be one of the hexagons from $W_1^1$. For $2\leq j\leq n$ let $W_1^j$ be a hexagon grid created by moving uncoloured $W_1^1$ by a vector $(j-1)/n\ [\sqrt{3}/2,0]$. Now lets say $H$ is coloured $(1,1)$ and let's find first hexagon $H'$ in the same row from any $W_1^j$ that can also be coloured $(1,1)$ without creating a monochromatic edge in $G_{[1,b]}$ i.e at distance greater or equal to $b$ (see Figure \ref{nm1}). 
\begin{figure}[h]
\includegraphics[width=7cm]{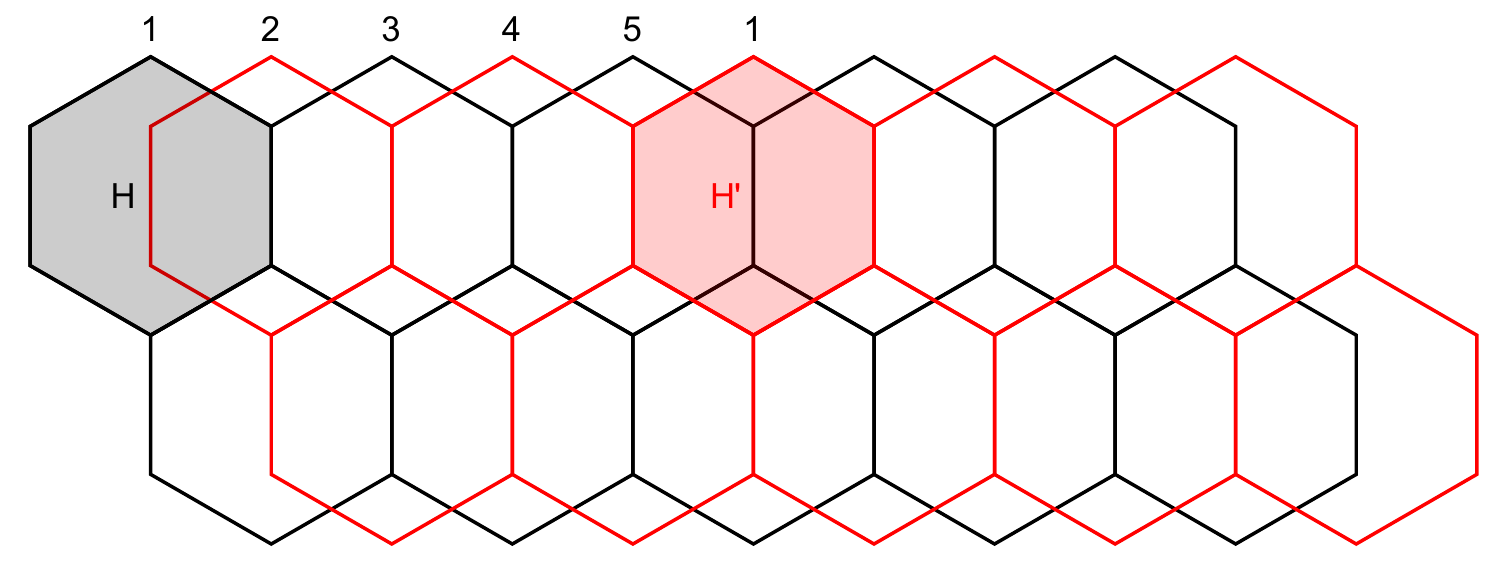}
\caption{2 folds in 4-fold colouring $G_{[1,1]}$ with 25 colours}\label{nm1}
\end{figure}

Counting the hexagons from $H$ to $H'$ we find that there are exactly $\lceil(b+\frac{\sqrt{3}}{2})\frac{2 n}{\sqrt{3}}\rceil=\lceil(\frac{2b}{\sqrt{3}}+1)\cdot n\rceil$ of them, since the distance between the centers of $H$ and $H'$ has to be at least $b+\frac{\sqrt{3}}{2}$. Then we shift each of $W_1^j$ by vectors $(i-1)/m\ [\sqrt{3}/4,-3/4]$ for $2\leq i\leq m$ getting grids $W_i^j$. Now remembering that $H$ is coloured $(1,1)$ we find the first hexagon $H''$ from $W_i^1$ to have the same colour (see Figure \ref{nm2}). The number of hexagons between the two is $\lceil(2b/ \sqrt{3}\ +1)\cdot m\rceil$.

\begin{figure}[h]
\includegraphics[width=9cm]{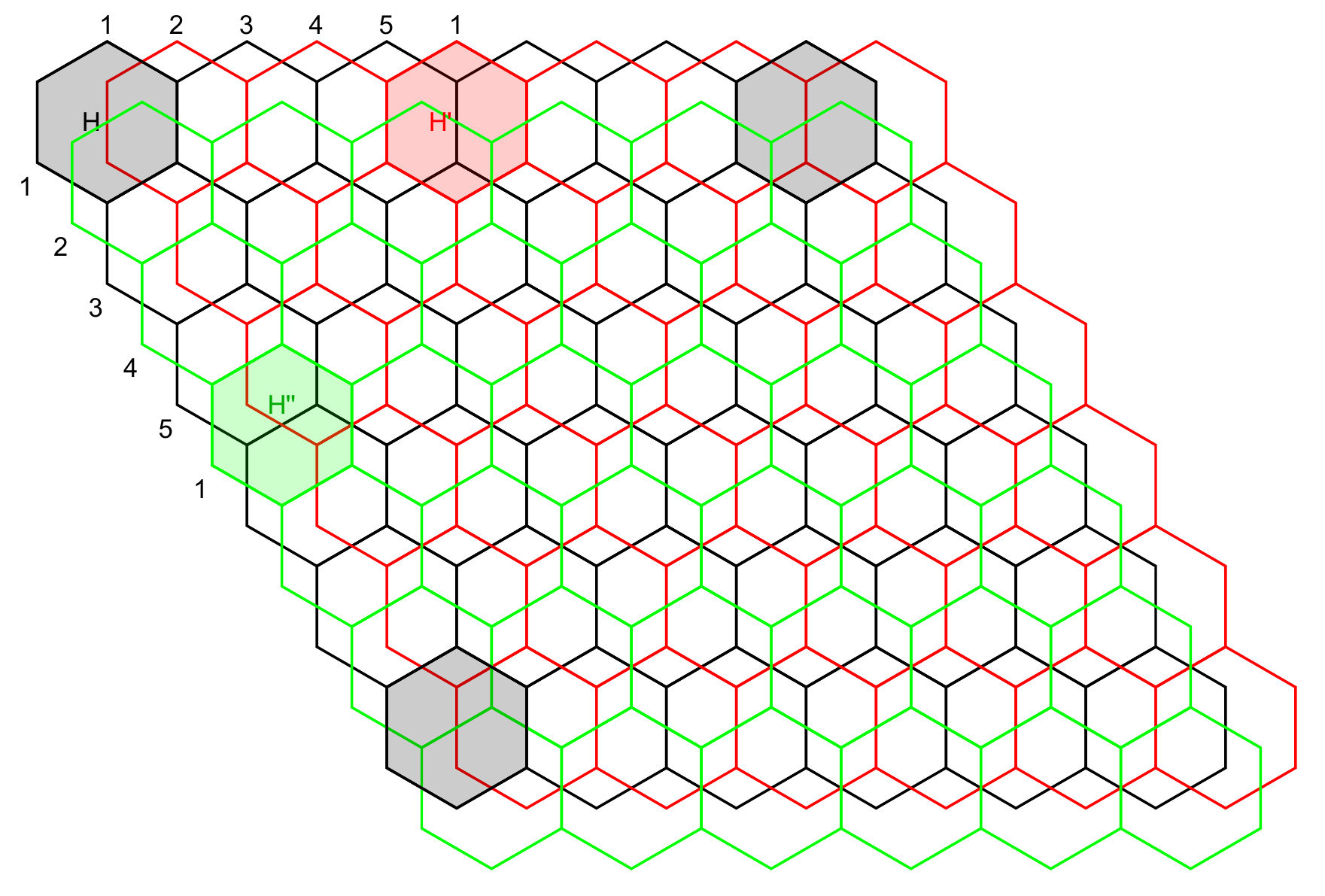}
\caption{3 folds in 4-fold colouring $G_{[1,1]}$ with 25 colours}\label{nm2}
\end{figure}

Now we have three hexagons $H$, $H'$, and $H''$ that can be coloured with the same colour, since the distance between $H'$ and $H''$ would be the smallest when $n=m$ and then the centers of the three hexagons create an equilateral triangle with side length at least $b+ \frac{\sqrt{3}}{2}$ (because $H$ and $H'$ are in proper distance).\\
The final colouring is created by giving each hexagon a pair of numbers, so that $H$ gets colour $(1,1)$, the next hexagon in the same row gets $(1,2)$ and so on until $H'$ gets $(1,1)$ again and the cycle repeats, and in the row below we get colours $(2,1),\ (2,2),\ (2,3)...$ and so on until we have row with $H''$ in which we use $(1,1),\ (1,2),\ (1,3)...$ again. Since we have $nm$ hexagon grids every point gets $nm$ colours. Finaly we get $nm$-fold colouring of $G_{[1,b]}$ with $\lceil(\frac{2b}{\sqrt{3}}+1)\cdot n\rceil\cdot \lceil(\frac{2b}{\sqrt{3}}+1)\cdot m\rceil$ colours.

\begin{figure}[h]
\includegraphics[width=9cm]{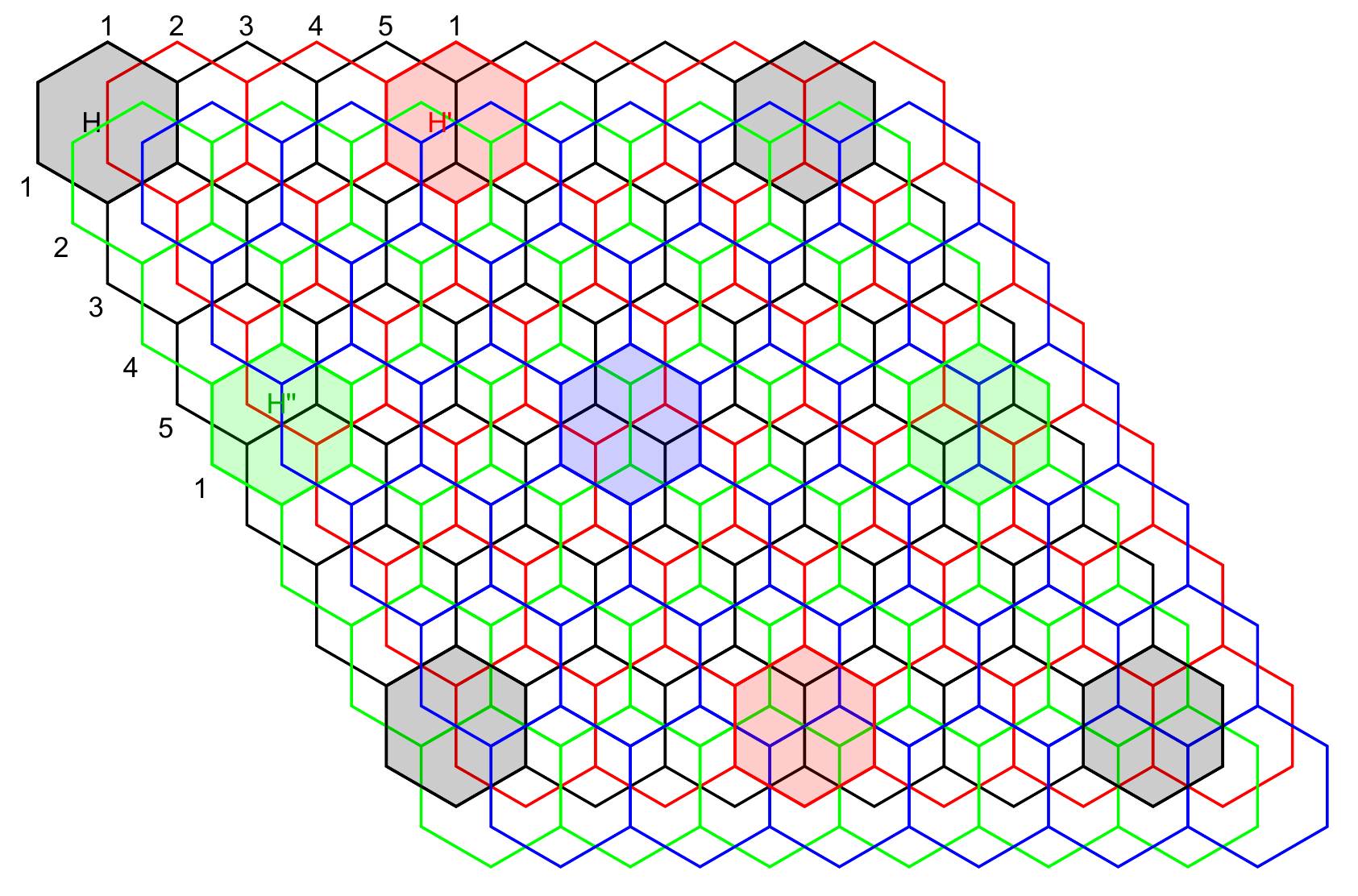}
\caption{4-fold colouring $G_{[1,1]}$ with 25 colours}\label{nm3}
\end{figure}
\qed\end{proof}

The following result can be seen as a combination of $2$-fold colouring approach from Theorem \ref{tw2-3} and the main method from Theorem \ref{nm}.

\begin{theor}\label{2nm}
There exists a $2nm$-fold colouring with $2\lceil(b+1)2n\rceil\lceil(b+1)\frac{2m}{3}\rceil$ colours of the graph $G_{[1,b]}$ i.e.
 $\frac{\chi_{2nm}(G_{[1,b]})}{2nm}\leq \frac{2\lceil\sqrt{3}(b+1)\frac{2n}{3}\rceil\lceil(b+1)\frac{2m}{3}\rceil}{2nm}$.
\end{theor}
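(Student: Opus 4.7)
The plan is to combine the two-macro-layer overlay of Theorem \ref{tw2-3} with the $nm$-fold scheme of Theorem \ref{nm}: first run the Theorem \ref{nm} construction to obtain one macro-layer, then superimpose a translated copy to double the number of folds while adding only a controlled number of new colours. For $n=m=1$ and $b=1$, this should reduce to Theorem \ref{tw2-3} (which gives $12$ colours, matching $2 \lceil 4\sqrt{3}/3 \rceil \lceil 4/3 \rceil = 2 \cdot 3 \cdot 2 = 12$).

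Concretely, I would start from a hexagonal grid of side $1/2$, create $nm$ sub-layers by the two families of shifts $(j/n)[\sqrt{3}/2,0]$ and $(i/m)[\sqrt{3}/4,-3/4]$ exactly as in Theorem \ref{nm}, and colour each sub-layer periodically with pairs $(r,c)$ of row and column indices modulo $M$ and $N$. The crucial point is that $N$ and $M$ can be chosen smaller than in Theorem \ref{nm}, because the second macro-layer, obtained by translating the entire construction by a carefully chosen vector $\vec{v}$ (the analogue of $[3\sqrt{3}/4,-3/2]$ from Theorem \ref{tw2-3}, rescaled to the parameter $b$), will occupy the positions skipped by the first macro-layer.

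The crux is then a geometric verification: every two same-coloured hexagons must lie at distance $\geq b$, the worst case being the diagonal distance between a colour class in the first macro-layer and its shifted counterpart in the second. Demanding that this worst-case diagonal is at least $b + \sqrt{3}/2$ (so that the hexagons themselves, not just their centres, are $\geq b$ apart, as in the $5\sqrt{3}/8 \approx 1.08$ verification of Theorem \ref{tw2-3}) fixes the periods to $N = \lceil \sqrt{3}(b+1)\cdot 2n/3 \rceil$ and $M = \lceil (b+1)\cdot 2m/3 \rceil$, yielding a total palette of $2NM$ colours.

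The main obstacle I expect is identifying the extremal same-coloured pair across the two macro-layers: this requires checking not only the direct diagonal prescribed by $\vec{v}$, but also every other diagonal obtained by composing $\vec{v}$ with translations from the within-macro-layer colour sub-lattice. Once one confirms that the binding pair is the one coming from $\vec{v}$ itself, the remainder of the proof is a routine rounding to integer numbers of sub-hexagons, and the final count follows.
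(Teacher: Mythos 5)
Your high-level plan --- run the $nm$-sub-layer scheme of Theorem \ref{nm} and then superimpose one translated copy of the whole thing, as in the $2$-fold colouring of Theorem \ref{tw2-3} --- is exactly the paper's stated strategy. But the quantitative core of your proposal does not work, for two reasons. First, you keep Theorem \ref{nm}'s oblique shift family $(i/m)[\sqrt{3}/4,-3/4]$ ``exactly''; the paper instead replaces it by the purely vertical shift $(i/m)[0,-3/2]$, so that each colour class becomes a \emph{rectangular} lattice with horizontal centre spacing at least $\sqrt{3}(b+1)$ and vertical centre spacing at least $b+1$. The second macro-layer is then placed at the centre of that rectangle, at centre distance at least $\frac{1}{2}\sqrt{3(b+1)^2+(b+1)^2}=b+1$ from the original --- exactly the margin needed once you subtract two circumradii of $1/2$. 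Second, and more seriously, you took your horizontal period $N=\lceil\sqrt{3}(b+1)\cdot 2n/3\rceil$ from the ``i.e.'' clause of the statement, which is a typo: the proof (and the first displayed formula) use $N=\lceil(b+1)2n\rceil$, corresponding to horizontal spacing $\sqrt{3}(b+1)$, not $b+1$. With your smaller $N$ the rectangle is roughly $(b+1)\times(b+1)$, the centred copy sits at centre distance only $(b+1)/\sqrt{2}$, and $(b+1)/\sqrt{2}<b+\sqrt{3}/2$ for every $b\geq 1$, so the two macro-layers collide and the colouring is not proper. Your sanity check is a symptom of this: the method does \emph{not} reduce to $12$ colours at $n=m=1$, $b=1$; the paper's own Table \ref{tab1} records $16$ colours for $j=2$ from this theorem, which is strictly worse than Theorem \ref{tw2-3}.

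Beyond the wrong constants, you explicitly defer the one step that actually constitutes the proof --- verifying that the binding same-coloured pair is the prescribed diagonal and that all composed translations stay at distance at least $b$ --- calling it ``the main obstacle I expect.'' That verification is precisely where the choice of $\sqrt{3}(b+1)$ versus $b+1$ for the horizontal spacing is forced, so it cannot be left as a routine afterthought.
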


\begin{proof}
Note that the proof uses similar methods to the previous one. We are going to create $2\cdot n\cdot m$ coloured hexagon grids. A colour of a hexagon will be a pair of numbers, first of which will be related to the row the hexagon is in and the second corresponds to the column.\\
Let $W_1^1$ be a hexagon grid with hexagons with side length 1/2. Let $H$ be one of the hexagons from $W_1^1$. For $2\leq i\leq m$ let $W_i^1$ be a hexagon grid created by moving uncoloured $W_1^1$ by a vector $(i-1)/m\ [0,-3/2]$. Now lets say $H$ is coloured $(1,1)$ and let's find first hexagon $H'$ in the same column from any $W_i^1$ that can also be coloured $(1,1)$ without creating a monochromatic edge in $G_{[1,b]}$ i.e at distance greater or equal to $b$, so the distance between the centers of $H$ and $H'$ needs to be at least $b+1$ (see Figure \ref{j212} for $4$-fold colouring with $4\cdot 6=24$ colours of $G_{[1,1]}$ with $n=1$ and $m=2$)
\begin{figure}[!h]
\includegraphics[width=9cm]{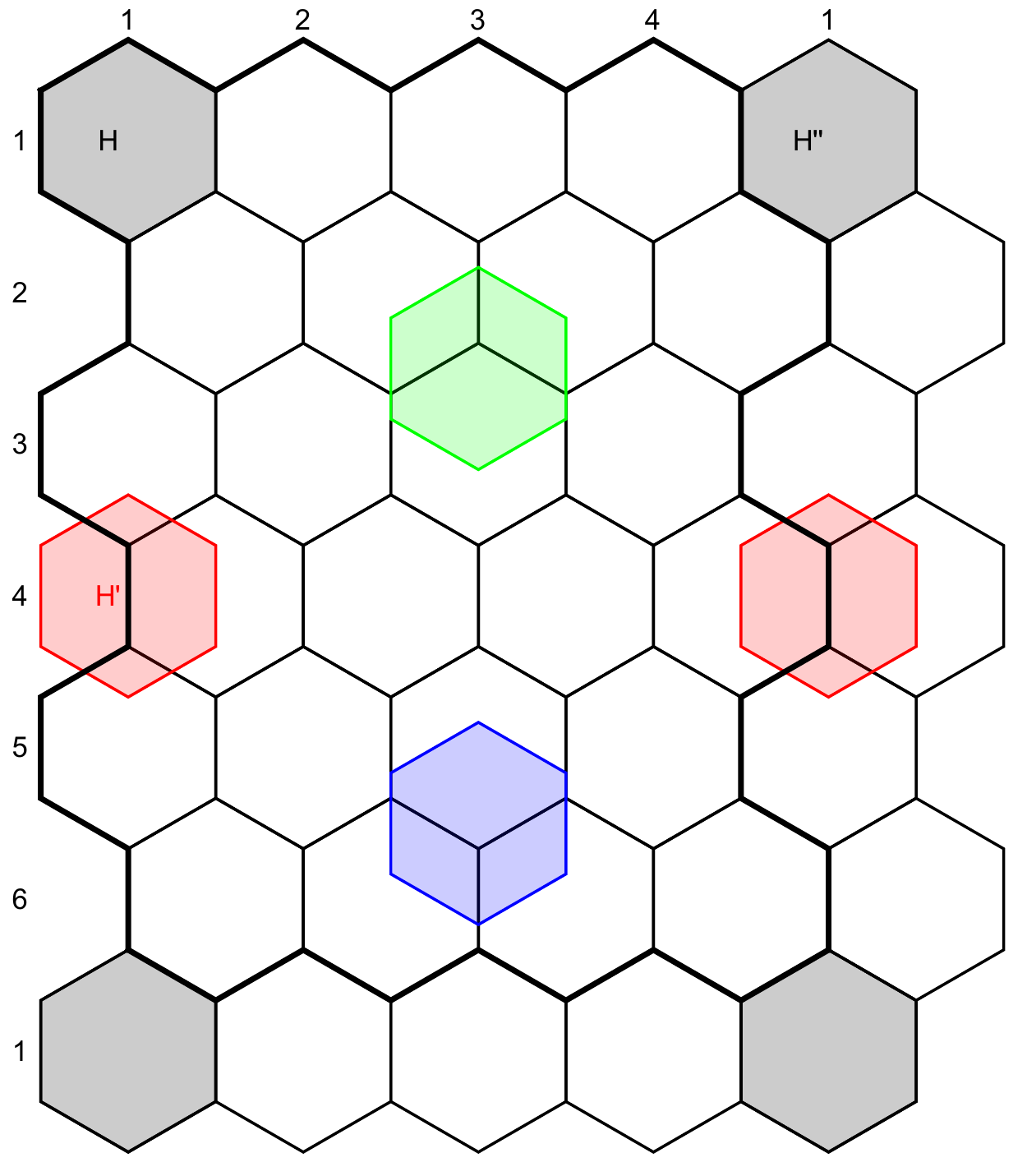}
\caption{4-fold colouring $G_{[1,1]}$ with 24 colours}\label{j212}
\end{figure}
Since the distance between the centers of $H$ and $H'$ is at least $b+1$ but we chose $H'$ to be as close as possible to $H$, then there are $\lceil(b+1)\frac{2m}{3}\rceil$ rows between $H$ and $H'$. Now for $2\leq j\leq m$ and $2\leq j\leq n$ let $W_i^j$ be a hexagon grid created by shifting $W_i^1$ by a vector $(j-1)/n\ [\sqrt{3}/2,0]$. Let $H''$ be the first hexagon in the same row as $H$ in any of $W_1^j$ such that the distance between the centers of the two is at least $\sqrt{3}(b+1)$. The number of hexagons between $H$ and $H''$ is  $\lceil\sqrt{3}(b+1)\frac{2n}{\sqrt{3}}\rceil= \lceil (b+1)2n\rceil\geq b+\frac{\sqrt{3}}{2}$.\\
The distance between the centers of $H'$ and $H''$ is at least: 
$$\sqrt{ \big(\ \big\lceil(b+1)\frac{2m}{3}\big\rceil\ \frac{3}{2m\ }\big)^2 \cdot  \big(\ \big\lceil\sqrt{3}(b+1)\frac{2n}{\sqrt{3}}\big\rceil\ \frac{\sqrt{3}}{2n}\ \big)^2 }\geq$$ $$ \geq \sqrt{(b+1)^2 + (\sqrt{3}(b+1))^2}= \sqrt{4(b+1)^2} = 2b+2.$$
Since the distance is at least $2b+2$ they can be coloured the same colour and there is enough space between them to put another hexagon in the same colour between them. So we create new hexagon grids $V_i^j$ by shifting $W_i^j$ by a vector $ [ \big\lceil\sqrt{3}(b+1)\frac{2n}{\sqrt{3}}\big\rceil \cdot \frac{\sqrt{3}}{4n}, \big\lceil(b+1)\frac{2m}{3}\big\rceil \cdot \frac{3}{4m} ] $.
The final colouring is created by giving each hexagon a pair of numbers, so that $H$ gets colour $(1,1)$ the next hexagon in the same row from any $W_1^j$ gets $(1,2)$ and so on until $H''$ gets $(1,1)$ again and the cycle repeats. The next row from $W_1^j$ has the first coordinate in all colours equal to $2$. The set of numbers for second coordinate is $\{1,2,..., \big\lceil\sqrt{3}(b+1)\frac{2n}{\sqrt{3}}\big\rceil\}$, and for the first coordinate $2\big\lceil(b+1)\frac{2m}{3}\big\rceil$ (since there are 2 rows of hexagons between 2 hexagons from the same grid in the same column). So we use $ 2\big\lceil(b+1)\frac{2m}{3}\big\rceil \cdot  \big\lceil\sqrt{3}(b+1)\frac{2n}{\sqrt{3}}\big\rceil $ colours using $2nm$ hexagon grids.
\qed\end{proof}

\subsection{Summary}

Theorems \ref{nm} and \ref{2nm} give different results and we cannot say one is stronger than another. Table \ref{tab1} presents comparison of the two for $j$-fold colouring of $G_{[1,1]}$ with small values of $j$ ($k$ is the numbers of colours used). We bold best results for a fixed $j$.

\begin{table}[!h]
\begin{center}
\begin{tabular}{c||c | c|c|c|c|c|c}
method: & $j=$			 & 2    & 4   & 6     & 8 & 10& 12 \\\hline
j=nm &  $k=$     &\textbf{15}    & 25  & 35   &\textbf{45}   &\textbf{55}  & \textbf{63}\\
 & $k/j \approx$ & 7,5  &6,25 &5,83  &5,63 &5,5 & 5,25\\\hline
j=2nm & $k=$     &   16 &\textbf{24}   &\textbf{32}    &48 &56  &64     \\
&$k/j \approx$   & 8    &6    & 5.33 &6  &5,6 & 5,33
\end{tabular}\caption{Applications of Theorems \ref{nm} and \ref{2nm} for $G_{[1,1]}$ with small $j$.}\label{tab1}
\end{center}
\end{table}

Our best results in $j$-fold colouring of $G_0$ with small values of $j$ using $k$ colours are summarized in Table \ref{tab2}. They follow from Theorems \ref{tw2-3}, \ref{tw37}, \ref{nm}, \ref{2nm}.
\begin{table}[!h]
\begin{center}
\begin{tabular}{c r|c|c|c|c|c|c|c}
$k$&$=$& 7 & 12 & 16 & 24& 32 & 32 & 37\\\hline
$j$&$=$& 1 & 2 & 3 & 4 & 5& 6& 7\\\hline
$k/j$&$ \approx$& 7 & 6 & 5.33 & 6 & 6.6& 5.33& 5.26
\end{tabular}\caption{Selected results in $j$-fold colouring of $G_{[1,1]}$ with small $j$.}\label{tab2}
\end{center}
\end{table}

For practical applications it is useful to consider $j$-fold colouring of graph $G_{[1,2]}$, especially with small $j$. Table \ref{tab3} presents our results in colouring $G_{[1,2]}$ using method from Theorem \ref{nm}. It appears that in this case the method from Theorem \ref{2nm} does not give good results.

\begin{table}[!h]
\begin{center}
\begin{tabular}{c | c|c|c|c|c}
 $k=$   &  12&  70& 100& 930 &960 \\\hline
 $j=$			 & 1    & 6   & 9     & 84 & 87 \\\hline
 $k/j \approx$ & 12 &  11,67 &11,11& 11,07 & 11,03\\\hline
\end{tabular}\caption{Applications of Theorem \ref{nm} for $G_{[1,2]}$ with small $j$.}\label{tab3}
\end{center}
\end{table}

\section*{acknowledgement}
We thank professor Zbigniew Lonc, Zbigniew Walczak and profesor Jacek Wojciechowski for introducing us the problem. 




\end{document}